\theoremstyle{definition}
\newtheorem{notation}{Notation}[section]
\theoremstyle{plain}
\newtheorem{lemma}[notation]{Lemma}
\newtheorem{thm}[notation]{Theorem}
\theoremstyle{definition}
\newtheorem{definition}[notation]{Definition}
\theoremstyle{definition}
\newtheorem{remark}[notation]{Remark}
\newcommand{\id}{\operatorname{id}}
\begin{document}

\title[Conditionally Free Product]{Conditionally Free reduced products of Hilbert spaces}

\author[O. Arizmendi]{Octavio Arizmendi}
\address{Centro de Investigaci\'on en Matem\'aticas,
				Jalisco s/n, Mineral de Valenciana,
				Guanajuato, Guanajuato, C.P. 36240, M\'exico}
\email{octavius@cimat.mx}

\author[M. Ballesteros]{Miguel Ballesteros}
\address{ Mathematical Physics Department,
                Instituto de Investigaciones en Matem\'aticas Aplicadas y Sistemas,
                Universidad Nacional Aut\'onoma de M\'exico, Campus  C.U.,
                Circuito Escolar 3000, C.P. 04510, M\'exico City, M\'exico}
\email{miguel.ballesteros@iimas.unam.mx}

\author[F. Torres-Ayala]{Francisco Torres-Ayala}
\address{Facultad de Ciencias, Departamento de Matem\'aticas,
				Universidad Nacional Aut\'onoma de M\'exico, Campus  C.U.,
               Circuito Exterior s/n, C.P. 04510, M\'exico City, M\'exico}
\email{tfrancisco@ciencias.unam.mx}

\thanks{Research supported by UNAM-PAPIIT, grant  IA105316}

\date{September 30th 2018}

\subjclass[2000]{Primary: 46L53 Secondary: 46L09}

\keywords{Conditionally Free Probability, Products of Hilbert Spaces, R-transform}

\begin{abstract}

We present a product of pairs of pointed Hilbert spaces that, in the context of Boz\.ejko, Leinert and Speicher's theory of conditionally free probability, plays the  role   of the reduced free product of pointed Hilbert spaces, and thus gives a unified construction for the natural notions of independence defined by Muraki.

 We additionally provide important applications of this construction.  We prove that, assuming  minor restrictions,  for any pair of conditionally free algebras there are copies of them that are conditionally free and also free,  a property that is frequently assumed (as hypothesis) to prove several results in the literature. Finally, we give a short proof of the linearization property of the $^cR$-transform (the analog of Voiculescu's $R$-transform in the context of conditionally free probability).

\end{abstract}

\maketitle

\section{Introduction}

Non-commutative probability is a theory that naturally arises from the description of expectation values of observables (non commutative random variables) in quantum mechanics. It is a well-established concept that is  widely used in  communities of physicists, mathematicians and mathematical physicists, and it is well-known since the beginning of quantum mechanics. 

In the early 90's, an innovative idea was introduced by Voiculescu that created a new branch in mathematics, which is called free probability. A key contribution of Voiculescu is a definition of independence for  random variables that is the non-commutative version of independence in the commutative case, which is a central concept in probability theory. The notion of independence is necessarily attached to constructions products that provide independent (non-commutative) random variables (similar to the  Kolmogorov extension theorem for the commutative case). Since the original input of Voiculescu, several notions of independence and their corresponding (free) products were introduced, each one of them used to solve specific problems.

In this paper we present for the first time a unified version of (free) product that contains and generalizes all notions of (free) products so far introduced  in the context of independence for non-commutative random variables, see \cite{Mur2}.  Our construction can be viewed as an analog  of the product associated to the theory of conditionally free probability by Boz\.ejko, Leinert and Speicher, for pointed Hilbert spaces.  

The notion of conditionally free independence was introduced first by Bo\.zejko  in the context of harmonic analysis in \cite{Boz}, and later by Boz\.ejko and Speicher in \cite{BS}  and  \cite{BLS}, using  $C^*$-algebras. It is called $\psi$-independence  in  \cite{BS} and it is defined as follows (see  \cite{BLS}):

\begin{definition}\label{Def1}
Let $(\mathcal{A}, \varphi, \psi)$  be a unital algebra  with two distinguished states, $   \varphi $   and $ \psi $. A family of unital sub-algebras $\{\mathcal{A}_\iota\}_{\iota \in F}$ is called conditionally free (c. free  for short), with respect to $(\varphi, \psi)$,  if for any natural $n\geq 2$, indices
$\kappa_i\in F $ with $\kappa_1 \neq \cdots \neq \kappa_n$ and elements $a_i\in \mathcal{A}_{\kappa_i}$, the relation  $\psi(a_i)=0$, for all $i$, implies $\varphi(a_1\cdots a_n)=\varphi(a_1)\cdots \varphi(a_n)$.
\end{definition}

A product associated to the concept  ``c. free'' is provided in \cite{BS} in the following way:

\begin{definition}
Let $F$ be a set of indices.  For every $\iota $ in $F$, let $(\mathcal{A}_\iota, \varphi_\iota, \psi_\iota )$  be a unital $C^*$-algebra with two states, $\varphi_\iota$ and $ \psi_\iota $. A pair $(\hat{\mathcal{A}}, \hat{\varphi})$, where $\hat{\mathcal{A}}$ is a unital $C^*$-algebra and $\hat{\varphi}$ is a state on $\hat{\mathcal{A}}$, is called a reduced $\psi$-product for $\{(\mathcal{A}_\iota, \varphi_\iota, \psi_\iota)\}_{\iota \in F}$ if the following properties hold:
\begin{enumerate}
\item there exist unital $*$-homomorphisms $j_\iota: \mathcal{A}_\iota \to \hat{\mathcal{A}}$ such that 
$\hat{\mathcal{A}}$ is generated by $\cup_{\iota \in F}j_\iota(\mathcal{A}_\iota)$;
\item $\hat{\varphi}\circ j_\iota=\varphi_\iota$ for all $\iota$ in $F$;
\item  for every natural number $n \geq 2$, every set of  indices  $\{\kappa_1, \cdots, \kappa_n  \}\subset F$ with $\kappa_1 \neq \cdots \neq \kappa_n$ and  every  $a_i \in \mathcal{A}_{\kappa_i}$,  $i \in \{ 1, \cdots n \}$, such that $\psi_{\kappa_i}(a_i)=0$, for every $i \in \{ 1, \cdots n \} $,  we have

\begin{align}\label{fact}
\hat{\varphi}(j_{\kappa_1}(a_1) \cdots j_{\kappa_n}(a_n))=\varphi_{\kappa_1}(a_1)\cdots \varphi_{\kappa_n}(a_n).
\end{align} 

\end{enumerate}
\end{definition}

The existence of $\psi$-products is proved in \cite{BS} under general conditions:

\begin{thm}\cite{BS}\label{Ex}
For each family $\{(\mathcal{A}_\iota, \varphi_\iota, \psi_\iota)\}_{\iota \in F}$  of unital $C^*$-algebras with two states, there exists a reduced $\psi$-product $(\hat{\mathcal{A}},\hat{\varphi})$.
\end{thm}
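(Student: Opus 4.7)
The plan is to reduce the theorem to a Hilbert-space construction in the spirit of Voiculescu's reduced free product, but modifying the vacuum state so that it realises $\varphi$ instead of $\psi$.

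For each $\iota\in F$ apply the GNS construction to $(\mathcal{A}_\iota,\psi_\iota)$ to obtain a cyclic $*$-representation $(\mathcal{H}_\iota,\pi_\iota,\xi_\iota)$ with $\psi_\iota(a)=\langle\pi_\iota(a)\xi_\iota,\xi_\iota\rangle$. Writing $\mathring{\mathcal{H}}_\iota=\mathcal{H}_\iota\ominus\mathbb{C}\xi_\iota$, form Voiculescu's free product Hilbert space
\begin{equation*}
\mathcal{H}=\mathbb{C}\Omega\;\oplus\;\bigoplus_{n\ge 1}\;\bigoplus_{\kappa_1\neq\cdots\neq\kappa_n}\mathring{\mathcal{H}}_{\kappa_1}\otimes\cdots\otimes\mathring{\mathcal{H}}_{\kappa_n},
\end{equation*}
and define $j_\iota:\mathcal{A}_\iota\to B(\mathcal{H})$ as the amplification of $\pi_\iota$ through the canonical unitary $\mathcal{H}\cong\mathcal{H}_\iota\otimes\mathcal{H}(\iota)$. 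Setting $\hat{\mathcal{A}}:=C^{*}\bigl(\bigcup_\iota j_\iota(\mathcal{A}_\iota)\bigr)\subset B(\mathcal{H})$ gives property~(1); for the vector state $\langle\,\cdot\,\Omega,\Omega\rangle$, properties~(2) and~(3) hold \emph{with $\psi$ replacing $\varphi$}, which is just Voiculescu's theorem and serves as a template.

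To obtain $\hat{\varphi}$, define it first on the dense $*$-subalgebra $\mathcal{A}_0$ generated by $\bigcup_\iota j_\iota(\mathcal{A}_\iota)$. Every element of $\mathcal{A}_0$ is a linear combination of reduced words $j_{\kappa_1}(a_1)\cdots j_{\kappa_n}(a_n)$ with $\kappa_1\neq\cdots\neq\kappa_n$; splitting $a_i=(a_i-\psi_{\kappa_i}(a_i)1)+\psi_{\kappa_i}(a_i)1$ expands any reduced word recursively into combinations of $\psi$-centred reduced words of length no larger than $n$. Declaring $\hat{\varphi}$ on $\psi$-centred reduced words by~\eqref{fact} and on each $j_\iota(\mathcal{A}_\iota)$ by $\hat{\varphi}\circ j_\iota=\varphi_\iota$, extended linearly, produces a well-defined linear functional on $\mathcal{A}_0$ that already satisfies conditions~(2) and~(3) at the algebraic level.

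The principal difficulty is showing that $\hat{\varphi}$ is positive and bounded, so that it extends to a state on $\hat{\mathcal{A}}$. I would do this by exhibiting a second Hilbert space $\widetilde{\mathcal{H}}$ with a unit vector $\Omega'$ and a $*$-representation $\Phi:\mathcal{A}_0\to B(\widetilde{\mathcal{H}})$ with $\hat{\varphi}(x)=\langle\Phi(x)\Omega',\Omega'\rangle$. Because the inner tensor levels must retain $\psi$-orthogonality while the vacuum must record $\varphi_\iota(a)$ as the coefficient of $\Omega'$ in $\Phi(j_\iota(a))\Omega'$, this $\widetilde{\mathcal{H}}$ is not the ordinary Voiculescu product but rather the c-free reduced product of pointed Hilbert spaces announced in the abstract. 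Constructing this mixed product together with its creation--annihilation operators, showing that $\Phi$ assembles into a bounded $*$-homomorphism with $\|\Phi(x)\|\le\|x\|$, and verifying the factorisation on a generic $\psi$-centred reduced word by tracking how successive creations place new tensor legs orthogonal to $\Omega'$ except for the leading $\varphi$-scalar, is the main technical task; once this is done, positivity is automatic and $\hat{\varphi}$ extends continuously to $\hat{\mathcal{A}}$.
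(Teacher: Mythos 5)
Your outline correctly identifies the destination --- realising $\hat{\varphi}$ as a vector state on a ``mixed'' product Hilbert space whose inner tensor legs carry the $\psi$-GNS spaces while the surface records $\varphi$ --- but it stops exactly where the proof begins. Everything you defer as ``the main technical task'' is the actual content of the argument: the definition of $H$ as a direct sum of alternating words $K^o_{i_1}\otimes\cdots\otimes K^o_{i_n}\otimes H^o_{\kappa}$ (note that \emph{both} GNS representations of each $\mathcal{A}_\iota$ are required, $(H_\iota,\xi_\iota)$ for $\varphi_\iota$ and $(K_\iota,\eta_\iota)$ for $\psi_\iota$, whereas your first paragraph only introduces the $\psi$-GNS spaces); the embeddings $\Lambda_{(T,S)}$ of Definition \ref{Def:CFEmbeddings}, acting as a copy of $T$ on $\mathbb{C}\xi\oplus H^o_\iota$ and as copies of $S$ on all deeper levels; the verification that $(T,S)\mapsto\Lambda_{(T,S)}$ is multiplicative, $*$-preserving, additive separately on the ranges of $P$ and $P^{\perp}$, and norm-bounded by $\max\{\|T\|,\|S\|\}$ (Lemma \ref{Lemma:PropertiesLambdaOp}), which is what makes each $\rho_\iota$ a unital $*$-homomorphism; and the explicit expansion of $\Lambda_n\cdots\Lambda_1\xi$ for $\psi$-centred alternating words (Lemma \ref{Lemma:CFAlternantingWordsAtxi}), whose sole $\xi$-component is $\xi\,\varphi_{\xi_{\Lambda_n}}(T_{\Lambda_n})\cdots\varphi_{\xi_{\Lambda_1}}(T_{\Lambda_1})$ and hence delivers \eqref{fact}. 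Naming the construction is not the same as carrying it out, and none of these steps is routine enough to omit.

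There is also a structural problem with the intermediate step you do spell out. Defining $\hat{\varphi}$ algebraically on the dense subalgebra $\mathcal{A}_0\subset B(\mathcal{H})$ by prescribing values on reduced words and extending linearly requires a well-definedness check: distinct linear combinations of reduced words can represent the same operator of $B(\mathcal{H})$, and you must show they receive the same value. Likewise, a map $\Phi$ specified only on the generators $j_\iota(a)$ extends to a $*$-homomorphism of $\mathcal{A}_0$ only if it annihilates the kernel of the word-evaluation in $B(\mathcal{H})$. Neither issue is addressed, and neither arises in the paper's route: there $\hat{\mathcal{A}}$ is defined from the outset as the $C^*$-algebra generated by the operators $\rho_\iota(x)=\Lambda_{(\pi_\iota(x),\sigma_\iota(x))}$ inside $\mathbb{B}(H)$, and $\hat{\varphi}$ is simply the vector state at $\xi$, so positivity, boundedness and well-definedness are automatic and the preliminary Voiculescu product of the $\psi$-GNS spaces is never needed. (Note also that the paper's own proof, via Theorem \ref{Coro:PsiProducts}, covers only the case of two algebras, deferring general $F$ to \cite{BS}; your outline is stated for arbitrary $F$ but would inherit the same restriction once the construction of Section \ref{Section3} is invoked.)
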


The proof of Theorem \ref{Ex}  in \cite{BS} uses  GNS representations with respect to the sates $\varphi_{\iota}$ and $\psi_{\iota}$, for every $\iota \in F$. They choose particular representations in such a way  that for every $\iota \in F$ the Hilbert spaces associated to $\varphi_{\iota}$ and $\psi_{\iota}$ coincide.    

In this  work we present a similar construction but instead of taking the same space for both states we take  \emph{different} spaces for each state, $H_\iota$ for $\varphi_\iota$ and $K_\iota$ for $\psi_\iota$ (see Theorem \ref{Coro:PsiProducts} below).  While this might seem to be a minor modification, having different spaces gives more flexibility and it leads to a product, that we call the conditionally free product of \emph{pairs} of Hilbert spaces, along with natural embeddings. 

An advantage  of the conditionally free product that we present in this paper  is that it contains (and generalizes) all products so far introduced in the context of independent random variables in the non-commutative  case. It naturally interpolates the constructions for products in the free, Boolean, monotone and orthogonal notions of independence. Each one of these notions of independence has a natural product of Hilbert spaces associated to them. For free independce the product is the reduced free product emerged from the works of Avitzur in \cite{Avi} and  Voiculescu in \cite{Voi1};  for the Boolean and monotone notions of  independence,  it is usually the tensor product; lastly, more recently, Lenczewski introduced the notion of orthogonal independence in \cite{Len}, along with an orthogonal product of Hilbert spaces. The product that we propose completes this picture and unifies all these constructions.

An important application of our construction is the following: It is assumed in several sources (see \cite{PW} and \cite{Pop}, for example) that conditionally free algebras with respect to the states $\varphi$ and $\psi$, see Definition \ref{Def1}, are also free with respect to $\psi$ - see Definition \ref{Voicu}.  Although a general characterization of when c. free algebras are also free is still an open question, in this paper we give a satisfactory answer in the following sense: given c. free algebras we can always find, under mild assumptions, copies of them  that are conditionally free and also free (in the way it is specified above), see Theorem \ref{Thm:FreeCopiesCFAlgebras}.

This paper is divided in five sections. In 
Section \ref{Section2} we provide basic definitions of the various types of independence. In Section \ref{Section3} we construct the conditionally free product of pairs of pointed Hilbert spaces that we introduce in this paper,  along with embeddings of operators. In Section \ref{Section4}  we prove  Theorem \ref{Thm:FreeCopiesCFAlgebras} (we explain above the content of this theorem)   and in Section \ref{SEction5} we  prove the linearization property of  $^cR$. 

\section{Basic definitions  and notations}\label{Section2}

Two Hilbert spaces, $H$ and $K$, are isomorphic, denoted $H\simeq K$, if there is a unitary operator $U:H\to K$.

For a Hilbert space $H$, $\mathbb{B}(H)$ denotes the bounded operators on $H$.

By a pointed Hilbert space we mean a pair $(H,\xi)$, where $H$ is a complex Hilbert space and $\xi$ is a unit vector in $H$. Given a pointed vector space $(H,\xi)$ we denote
$$
H^o:=H\ominus \mathbb{C}\xi=\{h\in H: \langle h, \xi \rangle_H=0 \},
$$
and we let $P_\xi, P_\xi^\perp : H\to H $ stand for the orthogonal projections onto $\mathbb{C}\xi$ and $H^o$, respectively. Hence $P_\xi^\perp = \id_H-P_\xi$. We frequently denote the elements in $H^o$ by $h^o$.

For a unital algebra $\mathcal{A}$, a state of $\mathcal{A}$ is a linear functional $\varphi:\mathcal{A} \to \mathbb{C}$ that satisfies $\varphi(1_\mathcal{A})=1$. If in addition $\mathcal{A}$ is a $*$-algebra we request
 that $\varphi(a^*a)\geq 0$ for any $a$. A non commutative probability space is a pair $(\mathcal{A}, \varphi)$ where $\mathcal{A}$ is an algebra and $\varphi$ is a state of $\mathcal{A}$.

\subsection{Free independence}

\begin{definition}[Voiculescu, \cite{Voi1}] \label{Voicu}
Let $(\mathcal{A},\varphi)$ denote a non commutative probability space. A pair of unital sub-algebras $\mathcal{A}_\alpha$ and $\mathcal{A}_\beta$ are called freely independent, with respect to $\varphi$ if, given a natural  $n\geq 2$, indices  $\kappa_1\neq\cdots \neq \kappa_n$  in $  \{ \alpha, \beta \}$, and elements $a_i\in \mathcal{A}_{\kappa_i}$ with $\varphi(a_i)=0$, for all $i$, it holds that $\varphi(a_1 \cdots a_n)=0$.
\end{definition}

Now we recall Voiculescu's construction
for reduced free product of Hilbert spaces and regular left representations.

Given two pointed Hilbert spaces, $(H_\alpha, \xi_\alpha)$ and $(H_\beta,\xi_\beta)$, its reduced free product, denoted $(H_\alpha, \xi_\alpha)*(H_\beta, \xi_\beta)=(H,\xi)$, is given by 
$$
H :=\mathbb{C}\xi \oplus \bigoplus_{n\geq 1} \bigoplus_{i_1 \neq  \cdots \neq i_n} H_{i_1}^o\otimes \cdots \otimes    H_{i_n}^o .
$$
Here we choose the symbol $\xi$ to denote a distinguished  element in $H$, with norm $1$.

We also have embeddings of $\mathbb{B}(H_\iota)$ into $\mathbb{B}(H)$ given as follows:  for fixed $\iota$ let
$$
H(\iota):=\mathbb{C}\xi \bigoplus_{n\geq 1} \bigoplus_{\substack{i_1\neq \cdots \neq i_n \\ i_1\neq\iota}}H_{i_1}^o\otimes \cdots \otimes  H_{i_n}^o    .
$$

We observe that   $H_\iota \otimes H(\iota) \simeq H$. This isomorphism is given by the unitary operator $V_\iota : H \to  H_\iota \otimes H(\iota)$, defined in the following way: 
$$V_\iota(\xi)=\xi_\iota \otimes \xi,$$ and for $h_j^o \in H_{\kappa_j}^o$, $\kappa_1\neq \cdots \neq\kappa_n$,
$$
V_\iota(h_1^o\otimes \cdots \otimes h_n^o)=\left\{ \begin{array}{cc}
h_1^o\otimes    (h_2^o \otimes \cdots h_n^o ) , & \textrm{if   $  \kappa_1=\iota $  and $n\geq 2$,}\\
 h_1^o    \otimes \xi, & \textrm{if    $  \kappa_1=\iota $ and $n=1$,  }\\
\xi_\iota \otimes  
 (h_1^o \otimes \cdots \otimes h_n^o)  , & \textrm{if $\kappa_1\neq\iota$.}
\end{array}
\right.
$$

We define an embedding from $  \mathbb{B}(H_\iota)$ into $\mathbb{B}(H)$ by 
$$T\mapsto \lambda_\iota(T):=V_\iota^{-1}(T\otimes \id_{H(\iota)} )V_\iota.$$

It is well known (see \cite{VDN}) that the algebras $\lambda_\alpha(\mathbb{B}(H_\alpha))$ and $\lambda_{\beta}(\mathbb{B}(H_\beta))$ are freely independent, with respect to the vector state at $\xi$.

\subsection{Boolean independence}

\begin{definition}[Speicher, Woroudi,  \cite{SW}]
Let $(\mathcal{A}, \varphi)$ denote a non commutative probability space. A pair of non-unital sub-algebras of $\mathcal{A}$, $\mathcal{A}_\alpha$ and $\mathcal{A}_\beta$, are Boolean independent, with respect to $\varphi$ if, whenever there is a natural number  $n\geq 2$, indices $\kappa_1\neq \cdots \neq \kappa_n$   in $  \{ \alpha, \beta \}$, and elements $a_i\in \mathcal{A}_{\kappa_i}$, it holds that
$$
\varphi(a_1\cdots a_n)=\varphi(a_1)\cdots \varphi(a_n).
$$
\end{definition}

In Boolean independence, it is common to use the tensor product as the base space to construct Boolean products (see \cite{HO}). But one can take a smaller space to achieve the same result. 

Given a pair of pointed Hilbert spaces,  $(H_\alpha, \xi_\alpha)$ and $(H_\beta, \xi_\beta)$, we define its Boolean product as
$$
(H,\xi)=(H_\alpha, \xi_\alpha)\uplus (H_\beta, \xi_\beta),
$$
where
$$H=:H_\beta^o \oplus \mathbb{C}\xi \oplus H_\alpha^o .$$
Here we choose the symbol $\xi$ to denote a distinguished element in $H$, with norm $1$.

We set  the maps $V_\iota:H \to H_\iota$ given by 
$$
V_\iota(h)=\left\{
\begin{array}{cc}
 	\xi_\iota, & \textrm{if $h=\xi$} \\
 	h, & \textrm{if $h\in H_\iota^o$}\\
 	0,   & \textrm{otherwise.}
 	\end{array}
\right.
$$

Using   these maps we define  the embeddings from $\mathbb{B}(H_\iota)$ into $\mathbb{B}(H)$ by
$$
T \mapsto V_\iota^* T V_\iota.
$$

Then the sub-algebras
$$\{V_\alpha^* T_\alpha V_\alpha: T_\alpha \in \mathbb{B}(H_\alpha)\}, \{V_\beta^* T_\beta V_\beta: T_\beta \in \mathbb{B}(H_\beta)\},$$
 are non-unital and  Boolean independent with respect to the vector state at $\xi$.

 \subsection{Monotone independence}
 
 In this type of independence we have to assume that the set of indexes $F$ is totally ordered. 
 
 For a sequence of indices
 $$
 \kappa_1 \neq \cdots \neq \kappa_n
 $$
with $n\geq 2$, we say that $\kappa_p$ is a peak in the sequence if either
\begin{enumerate}
\item $1< p< n$, $\kappa_{p-1}<\kappa_p$ and $\kappa_p >\kappa_{p+1}$, or;
\item $p=1$ and $\kappa_1>\kappa_2$, or;
\item $p=n$ and $\kappa_{n-1} <\kappa_n$.
\end{enumerate}

\begin{definition}[Muraki, \cite{Mur}]
Two sub-algebras $\mathcal{A}_\alpha$ and $\mathcal{A}_\beta$ (where we are assuming the ordering $\alpha <\beta$) are  called monotone independent, with respect to $\varphi$ if, given $n\geq 2$, indices $\kappa_1 \neq \cdots \neq \kappa_n$   in $  \{ \alpha, \beta \}$, and elements $a_i \in \mathcal{A}_{\kappa_i}$, it holds that
$$
\varphi(a_1 \cdots a_n)=\varphi(a_p)\varphi(a_1 \cdots \check{a}_p \cdots a_n),
$$
where by $\check{a}_p$ we mean that the element is omitted and $p$ is the first peak in the sequence $\kappa_1\neq \cdots \neq \kappa_n$.
\end{definition}

For a pair of pointed Hilbert spaces $(H_\alpha, \xi_\alpha)$ and $(H_\beta, \xi_\beta)$,  we define its monotone product by  $(H, \xi)$, where
$$
H :=  \Big (  H_\beta^o\otimes H_\alpha^o \Big )    \oplus H_\alpha^o \oplus \mathbb{C}\xi \oplus H_\beta^o,
$$
and we choose the symbol $\xi$ to denote a distinguished element in $H$, with norm $1$. Notice that $H$  is unitarily equivalent to $ H_\beta \otimes H_\alpha$.

 For every $\iota \in \{ \alpha, \beta \}  $ we set the maps $ V_{\iota}  :  H  \to H_{\iota}$, given by 
$$
V_\iota(h)=\left\{
\begin{array}{cc}
 	\xi_\iota, & \textrm{if $h=\xi$} \\
 	h, & \textrm{if $h\in H_\iota^o$}\\
 	0,   & \textrm{otherwise.}
 	\end{array}
\right.
$$
In addition, we define the map $V_\beta^{(0)}: H \to H_\beta \otimes H_\alpha^o $ as follows
$$
V_\beta^{(0)}(h)=\left\{
\begin{array}{cc}
 	\xi_\beta \otimes h, & \textrm{if $h\in H_\alpha^o$} \\
 	h, & \textrm{if $h\in H_\beta^o \otimes  H_\alpha^o$}\\
 	0,   & \textrm{otherwise.}
 	\end{array}
\right.
$$
 
The maps above induce  the embeddings 
$$
T_\alpha\in \mathbb{B}(H_\alpha) \mapsto V_\alpha^*T_\alpha V_\alpha, \quad T_\beta \in \mathbb{B}(H_\beta) \mapsto V_\beta^* T_\beta V_\beta + V_\beta^{(0)*}(T_\beta\otimes \id_{H_\alpha^o}) V_\beta^{(0)},
$$
and the sub-algebras
 $$ \{V_\alpha^* T_\alpha V_\alpha : T_\alpha \in \mathbb{B}(H_\alpha)\}, \{V_\beta^* T_\beta V_\beta + V_\beta^{(0)*}(T_\beta \otimes \id_{H_\alpha^o}) V_\beta^{(0)} : T_\beta \in \mathbb{B}(T_\beta)\} $$ 
 are  non-unital and monotone independent with respect to the vector state at $\xi$.

\subsection{Orthogonal independence}

\begin{definition}[Lenczewski, \cite{Len}]
Let $(\mathcal{A}, \varphi, \psi)$	denote a unital algebra with a pair of states and let $\mathcal{A}_\alpha$, $\mathcal{A}_\beta$ be two non-unital sub-algebras of $\mathcal{A}$. We say that $\mathcal{A}_\beta$ is orthogonal to $\mathcal{A}_\alpha $, with respect to $(\varphi, \psi)$ if 
\begin{enumerate}
\item $\varphi(ba_2)=\varphi(a_1b)=0$;
\item $\varphi(w_1a_1ba_2w_2)=\psi(b)(\varphi(w_1a_1a_2w_2)-\varphi(w_1a_1)\varphi(a_2w_2))$;
\end{enumerate}
for any $a_1,a_2$ elements in $\mathcal{A}_\alpha$, $b$ in $\mathcal{A}_\beta$ and $w_1$, $w_2$ elements in the algebra generated by $\mathcal{A}_\alpha$ and $\mathcal{A}_\beta$. 
\end{definition}

Given a pair of pointed Hilbert spaces, $(H_\alpha, \xi_\alpha)$, $(H_\beta, \xi_\beta)$, the orthogonal product $(H,\xi)=(H_\alpha, \xi_\alpha)\vdash (H_\beta, \xi_\beta)$ is defined by
$$
H=\mathbb{C}\xi\oplus H_\alpha^o \oplus \Big( H_\beta^o\otimes H_\alpha^o \Big). 
$$
Here we choose the symbol $\xi$ to denote a distinguished element in $H$, with norm $1$.

In this case we set the embeddings 
\begin{eqnarray*}
T\in \mathbb{B}(H_\alpha) \mapsto \tau_\alpha(T):=U^*(T\otimes P_{\xi_\beta})U \in \mathbb{B}(H),\\
S\in \mathbb{B}(H_\beta) \mapsto \tau_\beta(S):=U^*(P_{\xi_\alpha}^\perp \otimes S)U\in \mathbb{B}(H),
\end{eqnarray*}
where $U:H \to H_\alpha \otimes H_\beta$ is the isometry given by
$$
U(\xi)=\xi_\alpha \otimes \xi_\beta, U(h_\alpha^o)=h_\alpha^o\otimes \xi_\beta, U(h_\beta^o\otimes h_\alpha^o)=h_\alpha^o\otimes h_\beta^o.
$$

In \cite{Len} Lenczewski proved the next result.

\begin{thm}
Let $(H,\xi)=(H_\alpha, \xi_\alpha)\vdash (H_\beta, \xi_\beta)$, and let $\varphi$, $\varphi_\alpha$ and $\varphi_\beta$ be the states associated to $\xi,    \xi_\alpha      $ and $\xi_\beta$, respectively. We take  a unit vector $\eta \in H^o$ and    let $\psi$ denote the state of $\mathbb{B}(H)$  associated with it. Then 
\begin{enumerate}
\item $\tau_\beta(\mathbb{B}(H_\beta))$ is orthogonal to $\tau_\alpha(\mathbb{B}(H_\alpha))$ with respect to $(\varphi, \psi)$;
\item $\varphi \circ \tau_\alpha$ agrees with $\varphi_\alpha$ on $\mathbb{B}(H\alpha)$;
\item $\psi \circ \tau_\beta$ agrees with $\varphi_\beta $ on $\mathbb{B}(H_\beta)$.
\end{enumerate}

\end{thm}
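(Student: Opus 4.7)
The plan is to verify each of the three assertions by direct computation, using the decomposition $H=\mathbb{C}\xi\oplus H_\alpha^o\oplus(H_\beta^o\otimes H_\alpha^o)$ and the explicit formulas for the isometry $U$. A preliminary block-structure lemma, from which almost everything follows, is this: each $\tau_\alpha(T)$ preserves $\mathbb{C}\xi\oplus H_\alpha^o\cong H_\alpha$ (acting there as $T$ under the identifications $\xi\leftrightarrow\xi_\alpha$ and $h_\alpha^o\leftrightarrow h_\alpha^o$) and annihilates $H_\beta^o\otimes H_\alpha^o$; each $\tau_\beta(S)$ annihilates $\mathbb{C}\xi$ and satisfies
$$
\tau_\beta(S)h^o=\varphi_\beta(S)\,h^o+(P_{\xi_\beta}^\perp S\xi_\beta)\otimes h^o\quad\text{for }h^o\in H_\alpha^o.
$$
Both descriptions are obtained by unwinding the definitions of $U$, $P_{\xi_\beta}$, and $P_{\xi_\alpha}^\perp$, together with the fact that $U(h_\beta^o\otimes h_\alpha^o)=h_\alpha^o\otimes h_\beta^o$ swaps the tensor factors.

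Given this lemma, claim (2) is immediate, since $U\xi=\xi_\alpha\otimes\xi_\beta$ and $P_{\xi_\beta}\xi_\beta=\xi_\beta$ give $\varphi(\tau_\alpha(T))=\langle T\xi_\alpha,\xi_\alpha\rangle=\varphi_\alpha(T)$. For claim (3), the error term $(P_{\xi_\beta}^\perp S\xi_\beta)\otimes h^o$ lies in $H_\beta^o\otimes H_\alpha^o$ and is therefore orthogonal to $H_\alpha^o$; hence, for any unit vector $\eta\in H_\alpha^o\subset H^o$ (which is the natural reading of the statement, as a general $\eta$ in the tensor summand would produce an inner product $\langle Sh_\beta^o,h_\beta^o\rangle$ instead of $\varphi_\beta(S)$), one gets $\psi(\tau_\beta(S))=\langle\tau_\beta(S)\eta,\eta\rangle=\varphi_\beta(S)\,\|\eta\|^2=\varphi_\beta(S)$.

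The bulk of the work is claim (1). The first orthogonality axiom $\varphi(a_1b)=\varphi(ba_2)=0$ drops out immediately from $\tau_\beta(S)\xi=0=\tau_\beta(S)^*\xi$ (both consequences of $P_{\xi_\alpha}^\perp\xi_\alpha=0$), by rewriting $\varphi(a_1b)=\langle b\xi,a_1^*\xi\rangle$ and $\varphi(ba_2)=\langle a_2\xi,b^*\xi\rangle$. The main obstacle is the second axiom, which must hold for arbitrary words $w_1,w_2$ in the joint algebra. To handle this I would exploit the two facts from the block lemma that the range of $\tau_\alpha$ sits inside $\mathbb{C}\xi\oplus H_\alpha^o$ and that $\tau_\alpha$ annihilates $H_\beta^o\otimes H_\alpha^o$. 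Consequently $a_2w_2\xi$ has no $H_\beta^o\otimes H_\alpha^o$-component, and I may write $a_2w_2\xi=\varphi(a_2w_2)\,\xi+h_2^o$ with $h_2^o\in H_\alpha^o$. Applying $b$ kills the $\xi$-term and yields $\varphi_\beta(S)\,h_2^o$ plus an error in $H_\beta^o\otimes H_\alpha^o$; then applying $a_1$ annihilates the error and leaves
$$
a_1ba_2w_2\xi=\varphi_\beta(S)\,a_1h_2^o=\varphi_\beta(S)\bigl[a_1a_2w_2\xi-\varphi(a_2w_2)\,a_1\xi\bigr].
$$
Multiplying by $w_1$ on the left and pairing with $\xi$ produces
$$
\varphi(w_1a_1ba_2w_2)=\varphi_\beta(S)\bigl[\varphi(w_1a_1a_2w_2)-\varphi(w_1a_1)\,\varphi(a_2w_2)\bigr],
$$
which, combined with $\psi(b)=\varphi_\beta(S)$ from (3), is exactly the second orthogonality axiom.
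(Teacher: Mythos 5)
The paper itself offers no proof of this theorem --- it is quoted from Lenczewski's paper \cite{Len} and stated without argument --- so there is nothing internal to compare against; judged on its own, your verification is correct and complete. Your block-structure lemma is right: $\tau_\alpha(T)$ does preserve $\mathbb{C}\xi\oplus H_\alpha^o$ (acting there as $T$) and kills $H_\beta^o\otimes H_\alpha^o$ because $P_{\xi_\beta}h_\beta^o=0$, while $\tau_\beta(S)\xi=0$ and $\tau_\beta(S)h^o=\varphi_\beta(S)h^o+(P_{\xi_\beta}^\perp S\xi_\beta)\otimes h^o$ for $h^o\in H_\alpha^o$; from these, claims (2) and (3) and both orthogonality axioms follow exactly as you compute, the key step being that $a_2w_2\xi$ has no component in $H_\beta^o\otimes H_\alpha^o$, so it splits as $\varphi(a_2w_2)\xi+h_2^o$ with $h_2^o\in H_\alpha^o$. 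You are also right to flag the hypothesis on $\eta$: for a general unit vector $\eta\in H^o$ with a component in $H_\beta^o\otimes H_\alpha^o$, one gets contributions of the form $\langle Sh_\beta^o,h_\beta^o\rangle$ rather than $\varphi_\beta(S)$, so claim (3) genuinely requires $\eta\in H_\alpha^o$; the statement as transcribed in the paper is too permissive, and your restricted reading is the one under which the theorem (and hence your proof) is valid. The only cosmetic omission is that for part (1) one should note $\tau_\beta(S)^*=\tau_\beta(S^*)$ to justify $b^*\xi=0$, which you implicitly use and which follows at once from $(U^*(P_{\xi_\alpha}^\perp\otimes S)U)^*=U^*(P_{\xi_\alpha}^\perp\otimes S^*)U$.
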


\section{Conditionally Free Products of pointed Hilbert Spaces} \label{Section3}

\subsection{Construction}

In this section we present the construction of what we call the conditionally free product of pairs of pointed Hilbert spaces. The construction can be done for any finite number of pairs of pointed Hilbert spaces but for sake of clarity we write all definitions and proofs for only two pairs.

We start with some notation.

Given a natural number $n\geq 1$ and a   set $F$,  a function from $\{1,\dots, n\}$ to $F$ will be denoted  by 
$\underline{i}=(i_1, \dots, i_n)$, implying that $m$ is mapped to $i_m$ under $\underline{i}$. The empty function will be denoted as $\underline{\emptyset}$. We write $F^{[\mathbb{N}]}$ for the set of all functions of
the form $\underline{i}=(i_1,\dots, i_n)$, $n\geq 1$. For a function $\underline{i}$ in $F^{[\mathbb{N}]} \cup \{ \underline{\emptyset}\}$, we defined its length  as $|\underline{i} |=n$, with the convention that $|\underline{\emptyset}|=0$.

\begin{definition}
Given two pairs of pointed Hilbert spaces, $((H_i,\xi_i),(K_i,\eta_i))$, $i\in \{\alpha, \beta\}$, their conditionally free product (abbreviated c. free product) is defined as the pair
$$
((H_\alpha,\xi_\alpha),(K_\alpha,\eta_\alpha))*((H_\beta,\xi_\beta),(K_\beta,\eta_\beta)):=((H,\xi), (K,\eta)),
$$
where:
\begin{enumerate}
\item $(K,\eta)$ is Voiculescu's   free product space $(K_\alpha,\eta_\alpha)*(K_\beta,\eta_\beta)$.
\item Regarding $(H,\xi)$, 
\begin{eqnarray*}
H:= & & \big(     \bigoplus_{m=1}^\infty    \bigoplus_{ j_1\neq j_2\neq \cdots \neq j_m \neq\alpha }   K_{j_1}^o \otimes K_{j_2}^o\otimes \cdots \otimes     K_{j_m}^o   \otimes H_\alpha^o \big)  \\
 &\oplus& H_\alpha^o \oplus  \mathbb{C}\xi \oplus H_\beta ^o  \\
  &\oplus& \big( \bigoplus_{n=1}^\infty \bigoplus_{ i_1\neq i_2\neq \cdots \neq i_n \neq\beta  }  K_{i_1}^o \otimes K_{i_2}^o\otimes \cdots \otimes  K_{i_n}^o   \otimes H_\beta^o \big),
\end{eqnarray*}
where $\xi$ is a distinguished unit vector.
\end{enumerate}
\end{definition}

\begin{notation}
Let $F=\{\alpha, \beta\}$ and $\iota$ in $F$. We define
$$
I(\iota):=\{\underline{i}=(i_1,\dots, i_n)\in F^{[\mathbb{N}]}:   i_1\neq i_2\neq \cdots i_{n-1}\neq i_n\neq\iota  \} \cup \{ \emptyset \}.
$$

Given $\underline{i}=(i_1,\dots, i_n)\in I(\iota)$,  with $n\geq 1$,  we set:
$$
K^o_{\underline{i}}:=K_{i_1}^o\otimes \cdots \otimes K_{i_n}^o
$$
and
$$
K_{\underline{\emptyset}}^o=\mathbb{C}.
$$
We also take the convention that
$$
K_{\emptyset}^o\otimes L=L,
$$
for any Hilbert space $L$.

With this notation we can rewrite the first component of the c. free    
 product of Hilbert spaces as:
$$
H=\bigg( \bigoplus_{\underline{j}\in I(\alpha)} K_{\underline{j}}^o \otimes H_\alpha ^o  \bigg)  \oplus \mathbb{C}\xi  \oplus  \bigg(\bigoplus_{\underline{i}\in I(\beta)} K_{\underline{i}}^o\otimes H_\beta ^o \bigg)
$$
\end{notation}

\begin{remark} 
The   c. free product of Hilbert spaces  contains the free, Boolean,  monotone and orthogonal products of Hilbert spaces.
\begin{enumerate}
\item For the free product, set $H_i=K_i$, $\xi_i=\eta_i$, $i\in \{ \alpha, \beta\}$, to obtain
$$
H=\bigg( \bigoplus_{\underline{j}\in I(\alpha)} H_{\underline{j}}^o \otimes H_\alpha ^o  \bigg)   \oplus \mathbb{C}\xi \oplus  \bigg(\bigoplus_{\underline{i}\in I(\beta)} H_{\underline{i}}^o\otimes H_\beta^o \bigg),
$$
which is the usual free product  $(H_\alpha,\xi_\alpha)*(H_\beta,\xi_\beta)$.

\item For the Boolean product, set $K_i=\mathbb{C}$, $\eta_i=1$, $i\in\{ \alpha, \beta\}$. In this situation, $K_{\underline{i}}^o=\{0\}$, for any $\underline{i}\in I(\iota)$, wit $|\underline{i}| \geq 1$, $\iota\in\{ \alpha, \beta\}$, so that
$$
H=H_\alpha ^o \oplus \mathbb{C}\xi \oplus H_\beta ^o ,
$$
which equals $(H_1,\xi_\alpha )\uplus(H_2,\xi_\beta)$.

\item For the monotone product, set $K_\alpha =\mathbb{C}, \eta_\alpha=1$, $H_\beta=K_\beta, \xi_\beta=\eta_\beta$. In this case:
$K_{\underline{i}}^o=\{0\}$ for any $\underline{i}\in I(\beta) $, with $|\underline{i}|\geq 1$; $K_{\underline{j}}=\{0\}$ for any
$\underline{j}\in I(\alpha)$ with $|\underline{j}| \geq 2$; $K_\beta^o=H_\beta^o$.  Hence
$$
H=H_\beta^o \otimes H_\alpha^o \oplus H_\alpha^o \oplus  \mathbb{C}\xi \oplus H_\beta^o,
$$
which is, in a natural way,  unitarily isomorphic to $H_\alpha \otimes H_\beta$.

\item For  Lenczewski's orthogonal product, set $K_\alpha=\mathbb{C}, \eta_\alpha=1$, $H_\beta=\mathbb{C}, \eta_\beta=1$. In this case:
$K_{\underline{j}}^o=\{0\}$ for any $\underline{j}\in I(\alpha)$, with $|j|\geq 2$; $H_\beta^o=\{0\}$. Hence
$$
H=K_\beta^o\otimes H_\alpha^o\oplus H_\alpha^o \oplus \mathbb{C}\xi,
$$
which, in Lenczewski's notation, is $(H_\alpha,\xi_\alpha) \vdash (K_\beta,\eta_\beta)$.

\end{enumerate}

\end{remark}

\subsection{Embeddings}

Now that we have introduced the c. free product of Hilbert spaces, the next step is to find a way to embed operators into the bounded operators of the  c. free product, in such a way that the embedding  preserves the original states. In order to accomplish this we  write $H$ in different ways, according to which pair of operators, either from  $\mathbb{B}(H_\alpha)\times \mathbb{B}(K_\alpha)$ or $\mathbb{B}(H_\beta) \times \mathbb{B}(K_\beta)$, we want to embed.

Firstly, let us denote
\begin{eqnarray*}
I(\kappa ; \iota)& : =&\{\underline{i}=(i_1,\dots, i_n) \in I(\iota): i_1=\kappa \}, \quad \kappa \in \{ \alpha, \beta\},  
\end{eqnarray*}
and, for the rest of the discussion, we use the notation $\overline{\alpha}=\beta$, $\overline{\beta}=\alpha$.

For instance, the element in $I(\iota)$ of  length 1,  namely $(\overline{\iota})$,  belongs to $I(\overline{\iota};\iota)$. The element in $I(\iota)$ of length 0, namely $\underline{\emptyset}$, belongs to $I(\iota;\iota)$.

Notice that we can  express  $I(\iota)$ as 
$$
I(\iota)= \{\emptyset\} \sqcup  I(\overline{\iota};\iota) \sqcup \tilde{I}(\overline{\iota};\iota),
$$
where 
$$
\tilde{I}(\overline{\iota};\iota): = \{ \tilde{\underline{i}}\in I(\iota): \tilde{\underline{i}}=(\iota, \underline{i}), \underline{i}\in I(\overline{\iota};\iota)  \},
$$
and by $\sqcup$ we denote the disjoint union.

On the other hand, $I(\iota)$ can also be rewritten as
$$
I(\iota)=  I({\iota};\iota) \sqcup   \check{I}(\iota;\iota),
$$
where
\begin{eqnarray*}
\check{I}(\iota;\iota)& : =&  \{  
     \underline{\check{i}}      \in I(\iota):             \underline{\check{i}}        =(\overline{\iota},\underline{i} ), \underline{i}\in I(\iota;\iota) \},
\end{eqnarray*}
and we take  the convention that $(\overline{\iota}, \underline{\emptyset})=(\overline{\iota})$.

With this notation  we can rewrite the c. free  product of Hilbert spaces in the following two forms:
\begin{align*}
H=&\bigg( \bigoplus_{\underline{j}\in I(\beta;\alpha)} K_{\underline{j}}^o \otimes H_\alpha^o \oplus K_\alpha ^o\otimes K_{\underline{j}}^o\otimes H_\alpha^o  \bigg)   \oplus \mathbb{C}\xi \oplus H_\alpha^o    \nonumber \\
&\oplus   \bigg(\bigoplus_{\underline{i}\in I(\beta;\beta)} K_{\underline{i}}^o\otimes H_\beta^o \oplus K_{\alpha}^o\otimes K_{\underline{i}}^o\otimes H_\beta^o \bigg)   \\
=&\bigg( \bigoplus_{\underline{j}\in I(\alpha;\alpha)} K_{\underline{j}}^o \otimes H_\alpha^o \oplus K_\beta^o\otimes K_{\underline{j}}^o\otimes H_\alpha^o  \bigg)   \nonumber \\
&\oplus \mathbb{C}\xi \oplus H_\beta^o \oplus  \bigg(\bigoplus_{\underline{i}\in I(\alpha,\beta)} K_{\underline{i}}^o\otimes H_\beta^o \oplus K_{\beta}^o\otimes K_{\underline{i}}^o\otimes H_\beta^o \bigg),
\end{align*}

which, for general $\iota \in \{\alpha,\beta\}$, can be written as

\begin{eqnarray}
H=& &\bigg( \bigoplus_{\underline{j}\in I(\overline{\iota};\iota)} K_{\underline{j}}^o \otimes H_\iota ^o \oplus K_\iota ^o\otimes K_{\underline{j}}^o\otimes H_\iota^o  \bigg)  \nonumber  \\
&  &\oplus \mathbb{C}\xi \oplus H_\iota^o    \nonumber \\
& &\oplus   \bigg(\bigoplus_{\underline{i}\in I(\overline{\iota};\overline{\iota})} K_{\underline{i}}^o\otimes H_{\overline{\iota}}^o \oplus K_{\iota}^o\otimes K_{\underline{i}}^o\otimes H_{\underline{\iota}}^o \bigg) . \label{Eqn:Embedding1}
\end{eqnarray}

\begin{notation}
With the purpose of simplifying notation,  for a given $\iota$  in $\{\alpha, \beta\}$, operators in $\mathbb{B}(H_\iota)$ and $\mathbb{B}(K_\iota)$ will be denoted by $T_\iota$ and $S_\iota$, respectively.
\end{notation}

Now we proceed to the constructions of the embeddings.
Given an operator $T_\iota$ in $\mathbb{B}(H_\iota)$ we let 
\begin{eqnarray*}
T_\iota^{(\underline{\emptyset})}: & & \mathbb{C}\xi \oplus H_\iota^o \to \mathbb{C}\xi \oplus H_\iota^o  \\
& &\xi \mapsto \langle T_\iota \xi_\iota , \xi_\iota \rangle \xi + P_{\xi_\iota}^\perp T_\iota(\xi_\iota) \\
& & h^o \mapsto \langle T_\iota  h^o, \xi_\iota \rangle \xi + P_{\xi_\iota}^\perp T_\iota( h^o ), \hspace{.5cm} \text {for every $h^o \in 
 H_\iota^o $ } .
\end{eqnarray*}
In other words, $T_\iota^{(\underline{\emptyset})}$ is the operator that makes the following diagram commutative 

\[
\xymatrix{
\mathbb{C}\xi \oplus H_\iota^o  \ar@{-->}[r]^{T_\iota^{(\underline{\emptyset})}}  \ar[d]^{U_{\underline{\emptyset}}^{(\iota)}} &  \mathbb{C}\xi \oplus H_\iota^o   \\
H_\iota  \ar[r]^{T_\iota}  &  H_\iota\ar[u]^{(U_{\underline{\emptyset}}^{(\iota)})^*} 
}
\]
where $U_{\underline{\emptyset}}^{(\iota)}$ is the unitary operator that implements the natural isomorphism from $ \mathbb{C}\xi \oplus H_\iota^o $  onto  $ H_\iota$ , i.e.  $ U_{\underline{\emptyset}}^{(\iota)}  $ is unitary and
$ U_{\underline{\emptyset}}^{(\iota)} \xi  = \xi_\iota  $  and $  U_{\underline{\emptyset}}^{(\iota)} h = h  $, for every $h \in  H_\iota^o$.

Given an operator $S_\iota$ in $\mathbb{B}(K_\iota)$, we use 
Eq. \eqref{Eqn:Embedding1}  to define multiple copies of $S_\iota$. Fix $\kappa$  an element in  $\{\alpha, \beta\}$.  For every  $\underline{l} \in $  $I(\overline{\iota}; \kappa)$, we define: 

\begin{eqnarray*}
S_\iota^{(\underline{l})}: & &  K_{\underline{l}}^o\otimes H_\kappa^o \oplus  K_\iota^o\otimes K_{\underline{l}}^o\otimes H_\kappa^o    \to K_{\underline{l}}^o\otimes H_\kappa^o \oplus  K_\iota^o\otimes K_{\underline{l}}^o\otimes H_\kappa^o    \\
& & k_{\underline{l}}^o \otimes h_\kappa^o  \mapsto  \langle S_\iota \eta_\iota , \eta_\iota \rangle k_{\underline{l}}^o\otimes h_\kappa^o   +  (P_{\eta_\iota}^\perp S_\iota \eta_\iota)\otimes k_{\underline{l}}^o \otimes h_\kappa^o \\
& & k_\iota^o\otimes k_{\underline{l}}^o \otimes h_\kappa^o \mapsto \langle S_\iota k_\iota^o, \eta_\iota \rangle   k_{\underline{l}}^o   \otimes h_\kappa^o + (P_{\eta_\iota}^\perp S_\iota k_\iota^o)\otimes k_{\underline{l}}^o \otimes h_\iota^o.
\end{eqnarray*}
In other words, $S_\iota^{(\underline{l})}$ is the operator that makes the following diagram  commutative
\[
\xymatrix{
K_{\underline{l}}^o\otimes H_\kappa ^o\oplus K_\iota^o\otimes K_{\underline{l}}^o\otimes  H_\kappa^o   \ar@{-->}[r]^{S_\iota^{(\underline{l})}}  \ar[d]^{U_{\underline{l}}} & K_{\underline{l}}^o\otimes H_\kappa ^o\oplus K_\iota^o\otimes K_{\underline{l}}^o\otimes  H_\kappa^o  \\
K_\iota\otimes K_{\underline{l}}^o \otimes  H_\kappa^o \ar[r]^{S_\iota\otimes\textrm{Id}_{K_{\underline{l}}^o\otimes H_\kappa^o} }  &   K_\iota \otimes K_{\underline{l}}^o \otimes  H_\kappa^o \ar[u]^{U^*_{\underline{l}}},
}
\]
where $U_{\underline{l}}$ is the unitary operator that implements the natural isomorphism
\begin{eqnarray*}
K_{\underline{l}}^o\otimes H_\kappa ^o\oplus K_\iota^o\otimes K_{\underline{l}}^o\otimes  H_\kappa^o & \simeq & \mathbb{C}\eta_\iota \otimes K_{\underline{l}}^o\otimes H_\kappa ^o\oplus K_\iota^o\otimes K_{\underline{l}}^o\otimes  H_\kappa^o \\
&\simeq & ( \mathbb{C}\eta_\iota \oplus  K_\iota^o )\otimes K_{\underline{l}}^o\otimes  H_\kappa^o \\
&\simeq & K_\iota \otimes K_{\underline{l}}^o\otimes  H_\kappa^o.
\end{eqnarray*}

\begin{definition}\label{Def:CFEmbeddings}
Given  a pair $(T_\iota, S_\iota)$ in $\mathbb{B}(H_\iota) \times  \mathbb{B}(K_\iota)$,  we use Eq. \eqref{Eqn:Embedding1}  to define $\Lambda_{(T_\iota,S_\iota)}\in \mathbb{B}(H)$ by
$$
\Lambda_{(T_\iota,S_\iota)}:=\bigoplus_{\underline{j}\in I(\overline{\iota}, \iota)}S_\iota^{(\underline{j})} \oplus T_\iota^{(\underline{\emptyset})} \oplus  \bigoplus_{\underline{i}\in I(\overline{\iota}, \overline{\iota})} S_\iota^{(\underline{i})}.
$$

\end{definition}

Notice that each $S_\iota^{(\underline{l})}$ or $T_\iota^{(\emptyset)}$ are just copies of $S_\iota$ and $T_\iota$, respectively. Hence  we can think the operator $\Lambda_{(T_\iota, S_\iota)}$  as acting on the c. free product of Hilbert spaces in the following  way:
$$
\xymatrix @-2pc{ 
\cdots  & K_\alpha ^o\otimes K_\beta ^o \otimes H_\alpha^o \ar@/_1pc/@{<->}[rr]_{S_\alpha} & \oplus &  K_\beta^o \otimes H_\alpha^o \ar@/^1pc/@{<->}[rr]^{S_\beta}  & \oplus &  H_\alpha^o \ar@/_1pc/@{<->}[rr]_{T_\alpha} & \oplus & \mathbb{C}\xi \ar@/^1pc/@{<->}[rr]^{T_\beta} & \oplus &  H_\beta^o \ar@/_1pc/@{<->}[rr]_{S_\alpha} & \oplus  & K_\alpha^o \otimes H_\beta^o  & \cdots 
}
$$

In the diagram above, each arrow links two spaces, name them $A$ and $B$; notice that the operator  $\Lambda_{(T_\iota, S_\iota)}$ is reduced by $A\oplus B$ and   the action of $\Lambda_{(T_\iota, S_\iota)}$ on $A\oplus B$ is, modulo a unitary equivalence, the same as $T_\iota$ acting of $H_\iota$ or
$S_\iota$ acting on $K_\iota$.

The following lemma summarizes the main properties of the map $ (T_\iota, S_\iota)\mapsto \Lambda_{(T_\iota, S_\iota)}$.

\begin{lemma}\label{Lemma:PropertiesLambdaOp}

Fix $\iota$ in $\{\alpha, \beta\}$ and let $P:H\to H$ denote the orthogonal projection onto $\mathbb{C}\xi\oplus H_\iota^o$.
 
The map
\begin{eqnarray*}
\mathbb{B}(H_\iota )\times \mathbb{B}(K_\iota) \to \mathbb{B}(H) \\
(T_\iota,S_\iota)\mapsto \Lambda_{(T_\iota,S_\iota),}
\end{eqnarray*}
satisfies:
\begin{enumerate}

\item For any operators $T_\iota, \tilde{T}_\iota, S_\iota, \tilde{S}_\iota$:
\begin{eqnarray*}
\Lambda_{(T_\iota, S_\iota)}P&=&\Lambda_{(T_\iota, \tilde{S}_\iota)}P ,\\
\Lambda_{(T_\iota,S_\iota)}P^\perp &=& \Lambda_{(\tilde{T}_\iota, S_\iota)}P^\perp.
\end{eqnarray*}

\item For any scalar $\alpha$ 
  $$\Lambda_{(\alpha T_\iota, \alpha S_\iota)}=\alpha\Lambda_{(T_\iota,S_\iota)}.$$

\item  For any operators $T_{\iota_1}, T_{\iota_2} \in \mathbb{B}(H_\iota), S_{\iota_1}, S_{\iota_2} \in \mathbb{B}(K_\iota)$
 \begin{eqnarray*} \Lambda_{(T_{\iota_1}+T_{\iota_2},S_{\iota_1})}P&=&\Lambda_{(T_{\iota_1},S_{\iota_1})}P+\Lambda_{(T_{\iota_2},S_{\iota_1})}P  ,\\
   \Lambda_{(T_{\iota_1},S_{\iota_1}+S_{\iota_2})}P^\perp &=& \Lambda_{( T_{\iota_1}     ,S_{\iota_1})}P^\perp +\Lambda_{(   T_{\iota_1}          ,S_{\iota_2})}P^\perp,\\
  \Lambda_{(T_{\iota_2}T_{\iota_1}, S_{\iota_2}S_{\iota_1})}&  =  & \Lambda_{(T_{\iota_2},S_{\iota_2})}\Lambda_{(T_{\iota_1},S_{\iota_1})}, \\
  \Lambda_{(T_\iota, S_\iota)}^*&=&\Lambda_{(T_\iota^*, S_\iota^*)}. 
 \end{eqnarray*}
    
  \item If $\Lambda_{(T,S)}=0$ then $S=0=T$.

\item $\| \Lambda_{(T_\iota, S_\iota)} \| \leq \max\{ \|T\| , \|S\|\}$.

\end{enumerate}

\end{lemma}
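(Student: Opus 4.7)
The strategy is to exploit the orthogonal direct sum decomposition of $H$ given by Eq.~\eqref{Eqn:Embedding1}. Writing
$$H = H_{\underline{\emptyset}} \oplus \bigoplus_{\underline{j} \in I(\overline{\iota};\iota)} H_{\underline{j}} \oplus \bigoplus_{\underline{i} \in I(\overline{\iota};\overline{\iota})} H_{\underline{i}},$$
with $H_{\underline{\emptyset}} := \mathbb{C}\xi \oplus H_\iota^o$ and each remaining summand of the form $K_{\underline{l}}^o \otimes H_\kappa^o \oplus K_\iota^o \otimes K_{\underline{l}}^o \otimes H_\kappa^o$, the very definition of $\Lambda_{(T_\iota, S_\iota)}$ expresses it as $T_\iota^{(\underline{\emptyset})} \oplus \bigoplus_{\underline{l}} S_\iota^{(\underline{l})}$, so every summand is $\Lambda_{(T_\iota, S_\iota)}$-invariant. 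The commutative diagrams defining $T_\iota^{(\underline{\emptyset})}$ and $S_\iota^{(\underline{l})}$ read
$$T_\iota^{(\underline{\emptyset})} = (U^{(\iota)}_{\underline{\emptyset}})^* \, T_\iota \, U^{(\iota)}_{\underline{\emptyset}}, \qquad S_\iota^{(\underline{l})} = U_{\underline{l}}^* \, (S_\iota \otimes \id) \, U_{\underline{l}},$$
exhibiting each summand operator as a unitary conjugate of $T_\iota$ or of $S_\iota \otimes \id$. This essentially reduces every assertion to a routine fact about $T_\iota$ and $S_\iota$ on the original spaces.

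Given this reduction, (1) is immediate: $P$ is the projection onto $H_{\underline{\emptyset}}$, on which $\Lambda_{(T_\iota,S_\iota)}$ acts as $T_\iota^{(\underline{\emptyset})}$ (depending only on $T_\iota$), while on the complement $\Lambda_{(T_\iota,S_\iota)}$ agrees with the direct sum of the $S_\iota^{(\underline{l})}$'s (depending only on $S_\iota$). Part (2) is scalar homogeneity of unitary conjugation and of tensoring with the identity. The first two identities in (3) combine (1) with the linearity of the maps $T_\iota \mapsto T_\iota^{(\underline{\emptyset})}$ and $S_\iota \mapsto S_\iota^{(\underline{l})}$. For multiplicativity, the $\Lambda$-invariance of each summand moves the composition inside the direct sum, and then $(U^* A U)(U^* B U) = U^*(AB)U$ together with $(A_1 \otimes \id)(A_2 \otimes \id) = (A_1 A_2) \otimes \id$ yield $T_{\iota_2}^{(\underline{\emptyset})} T_{\iota_1}^{(\underline{\emptyset})} = (T_{\iota_2} T_{\iota_1})^{(\underline{\emptyset})}$ and $S_{\iota_2}^{(\underline{l})} S_{\iota_1}^{(\underline{l})} = (S_{\iota_2} S_{\iota_1})^{(\underline{l})}$. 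The adjoint formula follows the same pattern since unitary conjugation and tensoring by the identity both commute with taking adjoints.

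For (4), $\Lambda_{(T,S)} = 0$ forces $T^{(\underline{\emptyset})} = 0$, hence $T = 0$ by unitary equivalence; picking any $\underline{l}$ for which $H_{\underline{l}}$ is nontrivial (for instance $\underline{l} = (\overline{\iota}) \in I(\overline{\iota};\iota)$, provided $K_{\overline{\iota}}^o$ and $H_\iota^o$ are both nonzero) forces $S \otimes \id = 0$, so $S = 0$. For (5), the norm of a direct sum equals the supremum of the summands' norms, and each summand is unitarily equivalent to $T_\iota$ or $S_\iota \otimes \id$, which have norms $\|T_\iota\|$ and $\|S_\iota\|$ respectively; hence $\|\Lambda_{(T_\iota,S_\iota)}\| \leq \max\{\|T_\iota\|, \|S_\iota\|\}$. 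The only delicate point throughout is bookkeeping, namely verifying that the partition of summands prescribed by the definition of $\Lambda$ matches Eq.~\eqref{Eqn:Embedding1} under the conventions $\underline{\emptyset} \in I(\iota;\iota)$, $K_{\underline{\emptyset}}^o = \mathbb{C}$, and the splittings of $I(\iota)$ introduced before Eq.~\eqref{Eqn:Embedding1}.
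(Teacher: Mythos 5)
Your proof is correct and follows essentially the same route as the paper's: the paper's entire argument is the single observation that $\Lambda_{(T_\iota,S_\iota)}$ is reduced by the subspaces $H(\underline{l})$ (together with $\mathbb{C}\xi\oplus H_\iota^o$), and you have simply carried out the resulting routine verifications via the unitary conjugations $(U_{\underline{\emptyset}}^{(\iota)})^*T_\iota U_{\underline{\emptyset}}^{(\iota)}$ and $U_{\underline{l}}^*(S_\iota\otimes\mathrm{id})U_{\underline{l}}$. Your added care in part (4) --- noting that some block $H(\underline{l})$ must be nontrivial for $S$ to be recoverable --- is a nondegeneracy point the paper's proof glosses over.
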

\begin{proof}
The proof of all properties follows from the next observation.

Fix $\iota$ in  $\{\alpha, \beta\}$. Given $\kappa$ in $\{\alpha, \beta\}$  and $\underline{l} $ in $I(\overline{\iota}; \kappa)$, let us denote

$$
H( \underline{l}) =K_{\underline{l}}^o\otimes H_\kappa^o \oplus K_\iota^o\otimes K_{\underline{l}}^o\otimes H_\kappa^o.
$$

 By construction, operators of the form $\Lambda_{(T_\iota,S_\iota)}$ are reduced by the sub-spaces $H(\underline{l})$.

\end{proof}

\begin{remark}
The maps introduced  in Definition \ref{Def:CFEmbeddings}, when specialized to each one of  the free, Boolean, monotone and orthogonal types of independence, correspond to the natural embeddings in  each  case. 
\end{remark}

 Lemma \ref{Lemma:CFAlternantingWordsAtxi} below is an important technical tool that we use to prove several results. In particular we use it to prove  Theorem \ref{Coro:PsiProducts} below, which gives an alternative proof to the proof of Theorem  \ref{Ex} (see \cite{BS}), in the case that $F$ contains only two elements.  Our result is  slightly  stronger than the one in Theorem \ref{Ex} because in our case we construct a state $ \hat \varphi  $ which is a vector state.  Lemma  \ref{Lemma:CFAlternantingWordsAtxi} is also useful to prove Theorem  \ref{Thm:FreeCopiesCFAlgebras}, which is one of our main results.

\begin{notation}
Let $\varphi_\xi, \varphi_{\xi_\alpha}$ and $\varphi_{\xi_\beta}$ denote the vector states at  the vectors $\xi \in H$, $\xi_\alpha \in H_\alpha, \xi_\beta\in H_\beta$, respectively.
\end{notation}

\begin{lemma}\label{Lemma:CFAlternantingWordsAtxi}

Let $r\geq 1$ be a natural number and let
\begin{eqnarray*}
\{ T_{\alpha(1)}, \dots, T_{\alpha(r)}\} \subset \mathbb{B}(H_\alpha), \quad \{ S_{\alpha(1)}, \dots, S_{\alpha(r)}\} \subset \mathbb{B}(K_\alpha), \\
 \{T_{\beta(1)}, \dots, T_{\beta(r)}\} \subset \mathbb{B}(H_\beta),\quad \{S_{\beta(1)},\dots,  S_{\beta(r)}\} \subset \mathbb{B}(K_\beta),
 \end{eqnarray*} 
 be subsets of operators such that: 
$$\langle S_{\alpha(j)}\eta_\alpha, \eta_\alpha \rangle=0=\langle S_{\beta(j)}\eta_\beta, \eta_\beta \rangle, \quad  \textrm{ for all $1 \leq j \leq r$}. $$

Take $\{ \Lambda_{i}\}_{1\leq i \leq n }$ an alternating sequence of elements from $\{ \Lambda_{(T_{\alpha(j)},S_{\alpha(j)})} \}_{j=1}^r$ and  $\{ \Lambda_{(T_{\beta(j)},S_{\beta(j)})} \}_{j=1}^r$. 

For simplicity we write $\Lambda_i=\Lambda_{(T_{\Lambda_i}, S_{\Lambda_i})}$ and
\begin{eqnarray*}
\xi_{\Lambda_i}=\left\{ 
\begin{array}{cc}
\xi_\alpha & \textrm{if $\Lambda_i$ is of the form $\Lambda_{(T_\alpha, S_\alpha)}$}, \\
\xi_\beta & \textrm{if $\Lambda_i$ is of the form $\Lambda_{(T_\beta, S_\beta)}$} .
\end{array}
\right.
\end{eqnarray*}

\begin{eqnarray*}
\begin{array}{cc}
\eta_{\Lambda_i}=\left\{ 
\begin{array}{cc}
\eta_\alpha & \textrm{if $\Lambda_i$ is of the form $\Lambda_{(T_\alpha, S_\alpha)}$}, \\
\eta_\beta & \textrm{if $\Lambda_i$ is of the form $\Lambda_{(T_\beta, S_\beta)}$}.
\end{array}
\right.
\end{array}
\end{eqnarray*}

With this notation we have:
\begin{eqnarray*}
\Lambda_n \cdots  \Lambda_1 \xi &=& (P_{\eta_{\Lambda_n}}^\perp S_{\Lambda_n} \eta_{\Lambda_n}) \otimes \cdots \otimes (P_{\eta_{\Lambda_2}}^\perp S_{\Lambda_2}\eta_{\Lambda_2})\otimes (P_{\xi_{\Lambda_1}}^\perp T_{\Lambda_1}\xi_{\Lambda_1}) \\
&+&(P_{\eta_{\Lambda_n}}^\perp S_{\Lambda_n} \eta_{\Lambda_n}) \otimes \cdots \otimes  (P_{\eta_{\Lambda_2}}^\perp T_{\Lambda_2}\xi_{\Lambda_2}) \varphi_{\xi_{\Lambda_1}}( T_{\Lambda_1}) \\
&\vdots & \\
&+& (P_{\xi_{\Lambda_n}}^\perp T_{\Lambda_n}\xi_{\Lambda_n}) \varphi_{\xi_{\Lambda_{n-1}}}(  T_{  \Lambda_{n-1}  }   ) \cdots \varphi_{\xi_{\Lambda_1}}(T_{\Lambda_1}) \\
&+& \xi  \varphi_{\xi_{\Lambda_n}}(T_{\Lambda_n})\varphi_{\xi_{\Lambda_{n-1}}}( 
 T_{  \Lambda_{n-1}  }  
 ) \cdots \varphi_{\xi_{\Lambda_1}}(T_{\Lambda_1}).
\end{eqnarray*}
Notice that, in order to make the result clearer, scalars are multiplied from the right.
\end{lemma}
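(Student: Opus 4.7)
The plan is to proceed by induction on $n$. In the base case $n=1$, the vector $\xi$ lies in $\mathbb{C}\xi \oplus H_{\Lambda_1}^o$, so $\Lambda_1$ acts on it via $T_{\Lambda_1}^{(\underline{\emptyset})}$, yielding $\Lambda_1 \xi = \xi\, \varphi_{\xi_{\Lambda_1}}(T_{\Lambda_1}) + P_{\xi_{\Lambda_1}}^\perp T_{\Lambda_1}\xi_{\Lambda_1}$, which matches the claimed formula.

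For the inductive step, assume the formula holds for $n-1$, let $\iota$ denote the type of $\Lambda_n$, and observe that $\Lambda_{n-1}$ has type $\overline{\iota}$ by the alternating assumption. Each summand in the expression for $\Lambda_{n-1}\cdots\Lambda_1\xi$ falls in exactly one piece of the decomposition \eqref{Eqn:Embedding1} written for $\iota$: the scalar multiple of $\xi$ sits in $\mathbb{C}\xi$; the multiple of $P_{\xi_{\Lambda_{n-1}}}^\perp T_{\Lambda_{n-1}}\xi_{\Lambda_{n-1}}$ sits in $H_{\overline{\iota}}^o$, which corresponds to $\underline{\emptyset}\in I(\overline{\iota};\overline{\iota})$; and each remaining term has the form $(P_{\eta_{\Lambda_{n-1}}}^\perp S_{\Lambda_{n-1}}\eta_{\Lambda_{n-1}})\otimes v$ and lies in a summand $K_{\underline{l}}^o \otimes H_\kappa^o$ with $\underline{l}\in I(\overline{\iota};\kappa)$ of positive length starting with $\overline{\iota}$. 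The alternation of the $\Lambda_i$'s is precisely what makes the consecutive indices in $\underline{l}$ distinct, so $\underline{l}$ is a valid element of $I(\overline{\iota};\kappa)$.

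Now apply $\Lambda_n$ summand-by-summand using Definition \ref{Def:CFEmbeddings}. On the $\mathbb{C}\xi$ component, $T_{\Lambda_n}^{(\underline{\emptyset})}$ produces $\varphi_{\xi_{\Lambda_n}}(T_{\Lambda_n})\xi + P_{\xi_{\Lambda_n}}^\perp T_{\Lambda_n}\xi_{\Lambda_n}$. On each of the other components, $\Lambda_n$ acts as $S_{\Lambda_n}^{(\underline{l})}$ for the appropriate $\underline{l}$; the hypothesis $\langle S_{\Lambda_n}\eta_{\Lambda_n},\eta_{\Lambda_n}\rangle = 0$ annihilates the diagonal term in the formula defining $S_{\Lambda_n}^{(\underline{l})}$, so its action reduces to prepending $(P_{\eta_{\Lambda_n}}^\perp S_{\Lambda_n}\eta_{\Lambda_n})\otimes$ to each vector. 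Collecting these contributions and regrouping gives the formula for $n$: the two terms produced by $T_{\Lambda_n}^{(\underline{\emptyset})}$ supply the new summands corresponding to the $\xi$-row and the $k=n$ row, while the prepended $S_{\Lambda_n}$-terms shift each of the inductive-hypothesis summands one position upward.

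The main obstacle is the bookkeeping: verifying that each inductive-hypothesis summand lies in the exact piece of \eqref{Eqn:Embedding1} on which a single component of $\Lambda_n$ acts, and in particular that the alternation of types of $\Lambda_1,\ldots,\Lambda_n$ translates into the distinctness of consecutive indices required for $\underline{l}\in I(\overline{\iota};\kappa)$. Once this localization is in place, the vanishing hypothesis on $S_{\Lambda_n}\eta_{\Lambda_n}$ collapses $S_{\Lambda_n}^{(\underline{l})}$ to the simple tensoring operation, and the inductive step is a direct computation.
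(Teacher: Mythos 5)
Your proposal is correct and follows essentially the same route as the paper's proof: induction on $n$, with the inductive step applying $\Lambda_{n}$ summand-by-summand, using the hypothesis $\langle S_{\Lambda_n}\eta_{\Lambda_n},\eta_{\Lambda_n}\rangle=0$ to reduce the action on each non-$\xi$ summand to prepending $(P_{\eta_{\Lambda_n}}^\perp S_{\Lambda_n}\eta_{\Lambda_n})\otimes$, while the $\mathbb{C}\xi$ component contributes the two new terms. Your extra care in locating each summand within the decomposition \eqref{Eqn:Embedding1} is a welcome (implicit in the paper) bookkeeping detail, but the argument is the same.
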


\begin{proof}

We use induction on $n$.

For $n=1$, the definition of $\Lambda_1$ gives:
\begin{eqnarray*}
\Lambda_1 \xi &=& P_{\xi_{\Lambda_1}}^\perp T_{\Lambda_1}\xi_{\Lambda_1} + \langle T_{\Lambda_1}\xi_{\Lambda_1}, \xi_{\Lambda_1}\rangle \xi \\
&=& P_{\xi_{\Lambda_1}}^\perp T_{\Lambda_1}\xi_{\Lambda_1}  \\
&+&  \xi \varphi_\xi(\Lambda_1).
\end{eqnarray*}

We assume that the result  holds for $n$ and  prove it for $n+1$. By induction hypothesis we have:

\begin{eqnarray*}
\Lambda_{n+1}\Lambda_n \cdots \Lambda_1 \xi &=& \Lambda_{n+1}( (P_{\eta_{\Lambda_n}}^\perp S_{\Lambda_n} \eta_{\Lambda_n}) \otimes \cdots \otimes (P_{\eta_{\Lambda_2}}^\perp S_{\Lambda_2}\eta_{\Lambda_2})\otimes (P_{\xi_{\Lambda_1}}^\perp T_{\Lambda_1}\xi_{\Lambda_1}) ) \\
&+& \Lambda_{n+1}( (P_{\eta_{\Lambda_n}}^\perp S_{\Lambda_n} \eta_{\Lambda_n}) \otimes \cdots \otimes (P_{\eta_{\Lambda_2}}^\perp T_{\Lambda_2}\xi_{\Lambda_2}) ) \varphi_{\xi_{\Lambda_1}}( T_{\Lambda_1}) \\
&\vdots & \\
&+& \Lambda_{n+1}(P_{\xi_{\Lambda_n}}^\perp T_{\Lambda_n}\xi_{\Lambda_n}) \varphi_{\xi_{\Lambda_{n-1}}}
(  T_{\Lambda_{n-1}}     )
 \cdots \varphi_{\xi_{\Lambda_1}}(T_{\Lambda_1}) \\
&+& \Lambda_{n+1}(\xi) \varphi_{\xi_{\Lambda_n}}
(T_{\Lambda_n})\varphi_{\xi_{\Lambda_{n-1}}}
( T_{\Lambda_{n-1}}    )
 \cdots \varphi_{\xi_{\Lambda_1}}(T_{\Lambda_1}). 
\end{eqnarray*}

Fix $j$ with $1\leq j \leq n$ and let us denote
$$
w=(P_{\eta_{\Lambda_n}}^\perp S_{\Lambda_n} \eta_{\Lambda_n}) \otimes \cdots \otimes 
(P_{\eta_{\Lambda_{j+1}}}^\perp S_{\Lambda_{j+1}}\eta_{\Lambda_{j+1}})     
\otimes (P_{\xi_{\Lambda_j}}^\perp T_{\Lambda_j}\xi_{\Lambda_j}).
$$

Since the $\Lambda_i$'s are alternating, it follows from the definition of the  operators $S_{\Lambda}$'s that 
\begin{eqnarray*}
\Lambda_{n+1}(w)&=&\langle S_{\Lambda_{n+1}} \eta_{\Lambda_{n+1}}, \eta_{\Lambda_{n+1}} \rangle w+ (P_{\eta_{\Lambda_{n+1}}}^\perp S_{\Lambda_{n+1}}\eta_{\Lambda_{n+1}})\otimes w \\
&=& 0+ (P_{\eta_{\Lambda_{n+1}}}^\perp S_{\Lambda_{n+1}}\eta_{\Lambda_{n+1}})\otimes w.
\end{eqnarray*}

Hence we have

\begin{eqnarray*}
\Lambda_{n+1}\Lambda_n \cdots \Lambda_1 \xi &=&  (P_{\eta_{\Lambda_{n+1}}}^\perp S_{\Lambda_{n+1}}\eta_{\Lambda_{n+1}}) \otimes (P_{\eta_{\Lambda_n}}^\perp S_{\Lambda_n} \eta_{\Lambda_n}) \otimes \cdots \\ 
& \cdots &  \otimes(P_{\eta_{\Lambda_2}}^\perp S_{\Lambda_2}\eta_{\Lambda_2})\otimes (P_{\xi_{\Lambda_1}}^\perp T_{\Lambda_1}\xi_{\Lambda_1})  \\
&+& (P_{\eta_{\Lambda_{n+1}}}^\perp S_{\Lambda_{n+1}}\eta_{\Lambda_{n+1}}) \otimes  (P_{\eta_{\Lambda_n}}^\perp S_{\Lambda_n} \eta_{\Lambda_n}) \otimes \cdots  \\
& \cdots &  \otimes (P_{\eta_{\Lambda_3}}^\perp S_{\Lambda_3}\eta_{\Lambda_3})\otimes (P_{\eta_{\Lambda_2}}^\perp T_{\Lambda_2}\xi_{\Lambda_2})  \varphi_{\xi_{\Lambda_1}}( T_{\Lambda_1}) \\
&\vdots & \\
&+& (P_{\eta_{\Lambda_{n+1}}}^\perp S_{\Lambda_{n+1}}\eta_{\Lambda_{n+1}}) \otimes (P_{\xi_{\Lambda_n}}^\perp T_{\Lambda_n}\xi_{\Lambda_n}) \varphi_{\xi_{\Lambda_{n-1}}}
(  T_{ \Lambda_{n-1} }    )
 \cdots \\ 
&\cdots & \varphi_{\xi_{\Lambda_1}}(T_{\Lambda_1}) \\
&+& 
\Lambda_{n+1}(\xi) ( \varphi_{\xi_{\Lambda_n}}(T_{\Lambda_n})\varphi_{\xi_{\Lambda_{n-1}}}
(T_{\Lambda_{n-1}}    )
 \cdots \varphi_{\xi_{\Lambda_1}}(T_{\Lambda_1}) )
\end{eqnarray*}
Lastly, using
$$
\Lambda_{n+1}(\xi)= (P_{\xi_{\Lambda_{n+1}}}^\perp T_{\Lambda_{n+1}}\xi_{\Lambda_{n+1}})+ \xi \varphi_{\xi_{\Lambda_{n+1}}}(T_{\Lambda_{n+1}}),
$$
we conclude that

\begin{eqnarray*}
\Lambda_{n+1}\Lambda_n \cdots \Lambda_1 \xi &=&  (P_{\eta_{\Lambda_{n+1}}}^\perp S_{\Lambda_{n+1}}\eta_{\Lambda_{n+1}}) \otimes (P_{\eta_{\Lambda_n}}^\perp S_{\Lambda_n} \eta_{\Lambda_n}) \otimes \cdots \\ 
&\cdots & \otimes (P_{\eta_{\Lambda_2}}^\perp S_{\Lambda_2}\eta_{\Lambda_2})\otimes (P_{\xi_{\Lambda_1}}^\perp T_{\Lambda_1}\xi_{\Lambda_1})  \\
&+& (P_{\eta_{\Lambda_{n+1}}}^\perp S_{\Lambda_{n+1}}\eta_{\Lambda_{n+1}}) \otimes  (P_{\eta_{\Lambda_n}}^\perp S_{\Lambda_n} \eta_{\Lambda_n}) \otimes \cdots  \\
&\cdots & \otimes (P_{\eta_{\Lambda_2}}^\perp T_{\Lambda_2}\xi_{\Lambda_2})  \varphi_{\xi_{\Lambda_1}}( T_{\Lambda_1}) \\
&\vdots & \\
&+& (P_{\eta_{\Lambda_{n+1}}}^\perp S_{\Lambda_{n+1}}\eta_{\Lambda_{n+1}}) \otimes (P_{\xi_{\Lambda_n}}^\perp T_{\Lambda_n}\xi_{\Lambda_n}) \varphi_{\xi_{\Lambda_{n-1}}}(\Lambda_{n-1}) \cdots \\ 
&\cdots &  \varphi_{\xi_{\Lambda_1}}(T_{\Lambda_1}) \\
&+&  (P_{\xi_{\Lambda_{n+1}}}^\perp T_{\Lambda_{n+1}}\xi_{\Lambda_{n+1}}) \varphi_{\xi_{\Lambda_n}}(T_{\Lambda_n})\varphi_{\xi_{\Lambda_{n-1}}}  
( T_{   {\Lambda_{n-1}}  }   )
 \cdots  \\
&\cdots & \varphi_{\xi_{\Lambda_1}}(T_{\Lambda_1})  \\ 
&+& \xi \varphi_{\xi_{\Lambda_{n+1}}}(T_{\Lambda_{n+1}}) \varphi_{\xi_{\Lambda_n}}(T_{\Lambda_n})\varphi_{\xi_{\Lambda_{n-1}}}
( T_{\Lambda_{n-1} }  )
 \cdots \varphi_{\xi_{\Lambda_1}}(T_{\Lambda_1}). 
\end{eqnarray*}

\end{proof}

Next theorem (Theorem   \ref{Coro:PsiProducts}) is a sligthly stronger version of  Theorem \ref{Ex} (see \cite{BS}) because the state $ \hat \varphi $ that we construct is a vector state. It is the main result in this section. The proof Theorem   \ref{Coro:PsiProducts}  uses  technology that we develop in this paper and it is, therefore, different than the one in \cite{BS}. In particular,    Lemma  \ref{Lemma:CFAlternantingWordsAtxi} is the key ingredient in it.   

\begin{thm}\label{Coro:PsiProducts}
Let $(\mathcal{A}_\alpha, \varphi_\alpha, \psi_\alpha)$ and $(\mathcal{A}_\beta, \varphi_\beta, \psi_\beta)$ denote unital $C^*$-algebras with   pairs of states  $ (\varphi_\alpha, \psi_\alpha)  $ and $ (\varphi_\beta, \psi_\beta)  $, respectively. For each one of the states above, we denote by 

$(H_\iota, \xi_\iota)$ the GNS representation of $\varphi_\iota$, $\pi_\iota : \mathcal{A}_\iota \to \mathbb{B}(H_\iota)$,

$(K_\iota, \eta_\iota)$ the GNS representation of $\psi_\iota$, $\sigma_\iota : \mathcal{A}_\iota \to \mathbb{B}(K_\iota).$

We set
$$((H,\xi),(K,\eta)):=((H_\alpha,\xi_\alpha),(K_\alpha,\eta_\alpha))*((H_\beta,\xi_\beta),(K_\beta,\eta_\beta))$$
and let  $\rho_\iota:\mathcal{A}_\iota \to \mathbb{B}(H)$ be given by  $\rho_\iota(x)=\Lambda_{(\pi_\iota(x),\sigma_\iota(x))}$. We define $\hat{\mathcal{A}}$ as the unital $C^*$-algebra generated by $\rho_\iota(\mathcal{A}_\iota)$, $\iota=\alpha, \beta$ and let $\hat{\varphi}$ denote the vector state of $\mathbb{B}(H)$ induced by $\xi$.
 
 Then, $(\hat{\mathcal{A}}, \hat{\varphi})$ is a reduced $\psi$-product for $(\mathcal{A}_\alpha, \varphi_\alpha, \psi_\alpha)$ and $(\mathcal{A}_\beta, \varphi_\beta, \psi_\beta)$.
\end{thm}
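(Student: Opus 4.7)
My plan is to verify the three defining conditions of a reduced $\psi$-product by taking $j_\iota := \rho_\iota$ with $\hat{\varphi}$ the vector state at $\xi$. The first task is to check that each $\rho_\iota$ is a unital $*$-homomorphism. Since the GNS representations $\pi_\iota, \sigma_\iota$ are themselves $*$-representations, the map $x \mapsto (\pi_\iota(x), \sigma_\iota(x))$ is linear, multiplicative, unit- and $*$-preserving, and parts (2)--(4) of Lemma \ref{Lemma:PropertiesLambdaOp} directly transfer multiplicativity and the adjoint identity to $\Lambda$. For additivity I would split $\Lambda_{(T,S)} = \Lambda_{(T,S)}P + \Lambda_{(T,S)}P^\perp$ and combine parts (1) and (3) of the same lemma (on $P$ the $S$-slot is irrelevant, on $P^\perp$ the $T$-slot is irrelevant) to upgrade the separate additivities to joint linearity. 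Unital-ness is a direct inspection of $T_\iota^{(\underline{\emptyset})}$ and $S_\iota^{(\underline{l})}$ at the identities; that $\hat{\mathcal{A}}$ is generated by $\rho_\alpha(\mathcal{A}_\alpha) \cup \rho_\beta(\mathcal{A}_\beta)$ is built into its definition.

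Next I would verify $\hat{\varphi} \circ \rho_\iota = \varphi_\iota$ by a direct computation at $\xi$. By the definition of $T_\iota^{(\underline{\emptyset})}$ one has
\[
\rho_\iota(x)\xi \;=\; \langle \pi_\iota(x)\xi_\iota, \xi_\iota\rangle \, \xi \;+\; P_{\xi_\iota}^\perp \pi_\iota(x)\xi_\iota,
\]
and the second summand lies in $H_\iota^o$, which is orthogonal to $\mathbb{C}\xi$ inside $H$. Taking the inner product with $\xi$ yields $\hat{\varphi}(\rho_\iota(x)) = \langle \pi_\iota(x)\xi_\iota, \xi_\iota\rangle = \varphi_\iota(x)$, which is the GNS-formula for $\varphi_\iota$.

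Finally, the factorization property is where Lemma \ref{Lemma:CFAlternantingWordsAtxi} does the work. Take an alternating word $\rho_{\kappa_1}(a_1)\cdots \rho_{\kappa_n}(a_n)$ with $\psi_{\kappa_i}(a_i)=0$ for every $i$. The GNS identity $\psi_{\kappa_i}(a_i) = \langle \sigma_{\kappa_i}(a_i)\eta_{\kappa_i}, \eta_{\kappa_i}\rangle$ translates the centering hypothesis into $\langle S\eta, \eta\rangle = 0$, which is exactly the input required by Lemma \ref{Lemma:CFAlternantingWordsAtxi} (after relabelling the operators in reverse order). Its conclusion writes $\rho_{\kappa_1}(a_1)\cdots \rho_{\kappa_n}(a_n)\,\xi$ as a sum in which every term except the last one carries at least one tensor factor from some $H_\iota^o$ or $K_\iota^o$ and is therefore orthogonal to $\mathbb{C}\xi$; the remaining summand is precisely $\xi \, \prod_{i=1}^n \varphi_{\kappa_i}(a_i)$. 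Pairing with $\xi$ then produces the required equality \eqref{fact}. The conceptual heavy lifting has already been absorbed into Lemma \ref{Lemma:CFAlternantingWordsAtxi}, so the principal obstacle remaining here is the bookkeeping of translating between the algebraic $\psi$-centering condition and the geometric $\eta$-centering, together with keeping track of which tensor summands of the lemma's expansion survive the pairing with $\xi$.
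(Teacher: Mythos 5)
Your proposal is correct and follows essentially the same route as the paper: additivity of $\rho_\iota$ via the $P$/$P^\perp$ splitting combined with parts (1) and (3) of Lemma \ref{Lemma:PropertiesLambdaOp}, the state identity $\hat{\varphi}\circ\rho_\iota=\varphi_\iota$ by direct evaluation at $\xi$, and the factorization property read off from the expansion in Lemma \ref{Lemma:CFAlternantingWordsAtxi}, whose only surviving term under pairing with $\xi$ is $\xi\,\prod_i\varphi_{\kappa_i}(a_i)$. The translation of $\psi_{\kappa_i}(a_i)=0$ into $\langle S\eta,\eta\rangle=0$ via GNS, which you spell out, is exactly the (implicit) bridge the paper uses as well.
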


\begin{proof}
From the definition of the operator $\rho_\iota(x)$ it is straightforward to verify  that $\hat{\varphi}(\rho_\iota(x))=\varphi_\iota(x)$. Hence, from Lemma \ref{Lemma:CFAlternantingWordsAtxi} we deduce the factorization property for $\hat{\varphi}$ (Eq. \eqref{fact}).

From Lemma \ref{Lemma:PropertiesLambdaOp}, it follows that  $\rho_\iota$ is a unital and injective  $*$-representation of $\mathcal{A}$. 
 The only property that needs to be checked is addition. We prove it for $\iota = \alpha$, the case $  \iota = \beta $   is analogous. As in Lemma \ref{Lemma:PropertiesLambdaOp}, let $P$ denote the orthogonal projection onto $\mathbb{C}\xi\oplus H_\alpha^o$, then using Lemma \ref{Lemma:PropertiesLambdaOp} we have:

\begin{eqnarray*}
\rho_\alpha(x+y)&=& \Lambda_{(\tau_\alpha(x+y), \sigma_\alpha(x+y))} \\
&=& \Lambda_{(\tau_\alpha(x+y), \sigma_\alpha(x+y))}P + \Lambda_{(\tau_\alpha(x+y), \sigma_\alpha(x+y))}P^\perp \\
&=& \Lambda_{(\tau_\alpha(x),\sigma_\alpha(x+y) )}P + \Lambda_{(\tau_\alpha(y), \sigma_\alpha(x+y))}P \\
&+& \Lambda_{(\tau_\alpha(x+y), \sigma_\alpha(x))}P^\perp + \Lambda_{(\tau_\alpha(x+y), \sigma_\alpha(y))}P^\perp \\
&=& \Lambda_{(\tau_\alpha(x),\sigma_\alpha(x) )}P + \Lambda_{(\tau_\alpha(y), \sigma_\alpha(y))}P \\
&+& \Lambda_{(\tau_\alpha(x), \sigma_\alpha(x))}P^\perp + \Lambda_{(\tau_\alpha(y), \sigma_\alpha(y))}P^\perp \\
&=& \Lambda_{(\tau_\alpha(x),\sigma_\alpha(x))}+ \Lambda_{(\tau_\alpha(y), \sigma_\alpha(y))} \\
&=&\rho_\alpha(x)+\rho_\alpha(y).
\end{eqnarray*}

\end{proof}

\section{Free copies of c. free algebras}\label{Section4}

\subsection{Free copies}

 In Section \ref{Section3} and specially in Theorem \ref{Coro:PsiProducts} we prove that the reduced $\psi$-product $(\hat{\mathcal{A}}, \hat \varphi )$ of Theorem \ref{Ex} can be chosen in such a way that $ \hat{\varphi} $ is a vector state. Here we address a similar question, but instead of Theorem \ref{Ex} we consider Definition \ref{Def1} (c.  free algebras). More precisely, we prove that the vectors  $ \varphi, \psi$ in Definition \ref{Def1} can be selected to be vector states, in the case that $F$ contains only two elements. Here we need to replace the algebras $\mathcal{A}_{\iota}$ by copies of them and the states $ \varphi $ and $\psi$ are substituted by vector states $ \varphi_{\xi}  $ and $\psi_{\tilde \eta}$, respectively ($ \varphi_{\xi} $ is the vector state at $\xi$ and  $\psi_{\tilde \eta}$ is the vector state at $\tilde \eta $).   Moreover, we prove that these copies of the algebras  $\mathcal{A}_{\iota} $   are free with respect to $  \psi_{\tilde \eta}  $.

\begin{lemma}\label{Lemma:AgreementWRTxi}
For any pair of operators $T_\alpha,S_\alpha$,
\begin{eqnarray*}
\langle \Lambda_{(T_\alpha,S_\alpha)} \xi, \xi \rangle= \langle T_\alpha \xi_\alpha, \xi_\alpha \rangle.
\end{eqnarray*}
A similar result holds true if we replace $\alpha $ by $\beta$.
\end{lemma}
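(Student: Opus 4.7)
The plan is to unfold the definition of $\Lambda_{(T_\alpha,S_\alpha)}$ directly on the vector $\xi$ and observe that only the $T_\alpha^{(\underline{\emptyset})}$ block of the direct-sum decomposition contributes. Concretely, in the decomposition \eqref{Eqn:Embedding1} applied with $\iota=\alpha$, the distinguished vector $\xi$ sits inside the middle summand $\mathbb{C}\xi\oplus H_\alpha^o$, on which $\Lambda_{(T_\alpha,S_\alpha)}$ is defined to act as $T_\alpha^{(\underline{\emptyset})}$.

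From the defining diagram of $T_\alpha^{(\underline{\emptyset})}$ (or equivalently from the explicit formula above it), I have
\[
T_\alpha^{(\underline{\emptyset})}(\xi)=\langle T_\alpha\xi_\alpha,\xi_\alpha\rangle\,\xi+P_{\xi_\alpha}^\perp T_\alpha(\xi_\alpha).
\]
The second summand lies in $H_\alpha^o$, which is orthogonal to $\mathbb{C}\xi$ inside $H$. Taking the inner product with $\xi$ therefore kills the $H_\alpha^o$ part, leaving $\langle \Lambda_{(T_\alpha,S_\alpha)}\xi,\xi\rangle=\langle T_\alpha\xi_\alpha,\xi_\alpha\rangle$, which is the claim. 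The argument for $\beta$ is verbatim the same after swapping roles.

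Alternatively, this is simply the $n=1$ instance of Lemma \ref{Lemma:CFAlternantingWordsAtxi}, which gives $\Lambda_1\xi=P_{\xi_{\Lambda_1}}^\perp T_{\Lambda_1}\xi_{\Lambda_1}+\xi\,\varphi_{\xi_{\Lambda_1}}(T_{\Lambda_1})$; pairing with $\xi$ and using orthogonality of the first term to $\xi$ yields the stated identity, and notably the one-letter case does not invoke the centering hypothesis $\langle S\eta,\eta\rangle=0$ on the $S$-operators.

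There is essentially no obstacle to overcome here: the lemma is a direct bookkeeping consequence of how $\Lambda_{(T_\alpha,S_\alpha)}$ was built, serving as the base case (and a restatement) of the more elaborate combinatorial identity in Lemma \ref{Lemma:CFAlternantingWordsAtxi}. The only point requiring a moment of care is to identify the correct summand of $H$ containing $\xi$ in the decomposition \eqref{Eqn:Embedding1}, so that the correct block of $\Lambda_{(T_\alpha,S_\alpha)}$, namely $T_\alpha^{(\underline{\emptyset})}$ rather than any $S_\alpha^{(\underline{l})}$, is the one that acts.
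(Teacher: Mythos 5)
Your proof is correct and is essentially identical to the paper's: both unfold $\Lambda_{(T_\alpha,S_\alpha)}\xi = \langle T_\alpha\xi_\alpha,\xi_\alpha\rangle\,\xi + P_{\xi_\alpha}^\perp T_\alpha\xi_\alpha$ and use orthogonality of the $H_\alpha^o$ component to $\xi$. The added remark that this is the $n=1$ case of Lemma \ref{Lemma:CFAlternantingWordsAtxi} is accurate but not needed.
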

\begin{proof}
We prove the result for $\alpha$, the corresponding result for $\beta$ is proved analogously.  

\begin{eqnarray*}
\langle \Lambda_{(T_\alpha,S_\alpha)} \xi, \xi \rangle &=& \langle \langle T_\alpha \xi_\alpha, \xi_\alpha \rangle \xi + P_{\xi_\alpha}^\perp T_\alpha \xi_\alpha, \xi \rangle \\
&=& \langle T_\alpha \xi_\alpha, \xi_\alpha \rangle.
\end{eqnarray*}
\end{proof}

\begin{lemma}\label{Lemma:AgreementWRTeta}
For any unit vectors, $k_i^o \in K_i^o , h_i^o \in H_i^o$, $i=\alpha, \beta$ and operators $T_\alpha,S_\alpha, T_\beta, S_\beta$:

\begin{eqnarray*}
\langle \Lambda_{(T_\alpha,S_\alpha)} k_\beta^o\otimes h_\alpha^o , k_\beta^o\otimes h_\alpha^o \rangle &= &\langle S_\alpha \eta_\alpha, \eta_\alpha \rangle, \\
\langle \Lambda_{(T_\beta,S_\beta)} k_\alpha^o\otimes h_\beta^o , k_\alpha^o\otimes h_\beta^o \rangle &= &\langle S_\beta \eta_\beta, \eta_\beta \rangle.
\end{eqnarray*}

\end{lemma}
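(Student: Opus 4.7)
The plan is to unfold the definition of $\Lambda_{(T_\iota,S_\iota)}$ via the decomposition \eqref{Eqn:Embedding1} and observe that the given vectors land on exactly one summand, which then produces the claim by a short explicit calculation. I give the argument for the first identity; the second is obtained by interchanging the roles of $\alpha$ and $\beta$.

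First I would identify the correct summand. For $\iota=\alpha$, in the decomposition \eqref{Eqn:Embedding1} the unit vector $k_\beta^o\otimes h_\alpha^o$ belongs to the subspace $K_{\underline{j}}^o\otimes H_\alpha^o$ corresponding to $\underline{j}=(\beta)\in I(\overline{\alpha};\alpha)=I(\beta;\alpha)$. Because $\Lambda_{(T_\alpha,S_\alpha)}$ is defined in Definition \ref{Def:CFEmbeddings} as a direct sum and in particular acts on $K_{(\beta)}^o\otimes H_\alpha^o\oplus K_\alpha^o\otimes K_{(\beta)}^o\otimes H_\alpha^o$ as $S_\alpha^{(\underline{j})}$, the computation of $\Lambda_{(T_\alpha,S_\alpha)}(k_\beta^o\otimes h_\alpha^o)$ reduces to applying $S_\alpha^{(\beta)}$ to $k_\beta^o\otimes h_\alpha^o$.

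Next I would invoke the explicit formula for $S_\iota^{(\underline{l})}$ on a simple tensor, which gives
\begin{equation*}
S_\alpha^{(\beta)}(k_\beta^o\otimes h_\alpha^o)=\langle S_\alpha\eta_\alpha,\eta_\alpha\rangle\, k_\beta^o\otimes h_\alpha^o + (P_{\eta_\alpha}^\perp S_\alpha\eta_\alpha)\otimes k_\beta^o\otimes h_\alpha^o .
\end{equation*}
The second summand lives in $K_\alpha^o\otimes K_{(\beta)}^o\otimes H_\alpha^o$, which is orthogonal to $K_{(\beta)}^o\otimes H_\alpha^o$, so taking the inner product with $k_\beta^o\otimes h_\alpha^o$ kills that term and leaves $\langle S_\alpha\eta_\alpha,\eta_\alpha\rangle\|k_\beta^o\otimes h_\alpha^o\|^2=\langle S_\alpha\eta_\alpha,\eta_\alpha\rangle$, using that $k_\beta^o$ and $h_\alpha^o$ are unit vectors. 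The analogous argument for $\Lambda_{(T_\beta,S_\beta)}$ uses $\underline{j}=(\alpha)\in I(\alpha;\beta)$ and the subspace $K_\alpha^o\otimes H_\beta^o$.

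I do not anticipate any substantial obstacle: the statement is essentially a direct consequence of the block-diagonal structure of $\Lambda_{(T_\iota,S_\iota)}$ recorded in Lemma \ref{Lemma:PropertiesLambdaOp}, together with the defining formula for $S_\iota^{(\underline{l})}$. The only point requiring a line of care is to recognize that the ``extra'' term $(P_{\eta_\iota}^\perp S_\iota\eta_\iota)\otimes(\,\cdot\,)$ is orthogonal to the original vector, which is immediate since it lies in a different direct summand of $H$.
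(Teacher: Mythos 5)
Your proposal is correct and follows essentially the same route as the paper: apply the defining formula for $S_\iota^{(\underline{l})}$ to the given simple tensor and observe that the term $(P_{\eta_\iota}^\perp S_\iota\eta_\iota)\otimes(\,\cdot\,)$ is orthogonal to the original vector, so only the scalar term survives the inner product. The paper simply records the action of $\Lambda_{(T_\alpha,S_\alpha)}$ on $k_\beta^o\otimes h_\alpha^o$ and leaves the orthogonality observation implicit, whereas you spell out the identification of the relevant direct summand; the content is identical.
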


\begin{proof}

\begin{eqnarray*}
\Lambda_{(T_\alpha,S_\alpha)}( k_\beta^o\otimes h_\alpha^o)&=& \langle S_\alpha \eta_\alpha, \eta_\alpha \rangle k_\beta^o\otimes h_\alpha^o  \\
&+&  (P_{\eta_\alpha}^\perp S_\alpha \eta_\alpha) \otimes k_\beta^o \otimes h_\alpha^o, \\
\Lambda_{(T_\beta,S_\beta)}( k_\alpha^o\otimes h_\beta^o)&=&  \langle S_\beta \eta_\beta, \eta_\beta \rangle k_\alpha^o\otimes h_\beta^o \\
&+&  (P_{\eta_\beta}^\perp S_\beta \eta_\beta )\otimes    k_\alpha^o  \otimes h_\beta^o.
\end{eqnarray*}
\end{proof}

\begin{lemma}\label{Lemma:AgreementWRTtildeeta}
For any unit vectors, $k_i^o \in K_i^o , h_i^o \in H_i^o$, $i=\alpha, \beta$ and operators $T_\alpha,S_\alpha, T_\beta, S_\beta$:

\begin{eqnarray*}
\langle \Lambda_{(T_\beta,S_\beta)} (k_\beta^o\otimes h_\alpha^o) , k_\beta^o\otimes h_\alpha^o \rangle &= &\langle S_\beta k_\beta^o, k_\beta^o \rangle,\\
\langle \Lambda_{(T_\alpha,S_\alpha)}( k_\alpha^o\otimes h_\beta^o) , k_\alpha^o\otimes h_\beta^o \rangle &= &\langle S_\alpha  k_\alpha^o, k_\alpha^o \rangle.
\end{eqnarray*}

\end{lemma}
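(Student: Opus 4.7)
The plan is to directly unfold the definition of $\Lambda_{(T_\beta,S_\beta)}$ on the specific summand of the orthogonal decomposition \eqref{Eqn:Embedding1} that contains the test vector $k_\beta^o\otimes h_\alpha^o$, and then read off the scalar via the inner product. I will carry out the first identity in detail; the second follows by the symmetric argument (swap $\alpha\leftrightarrow\beta$ and use \eqref{Eqn:Embedding1} with $\iota=\alpha$).

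First I would locate $k_\beta^o\otimes h_\alpha^o$ inside \eqref{Eqn:Embedding1} taken with $\iota=\beta$, $\overline{\iota}=\alpha$. Because $\underline{\emptyset}\in I(\alpha;\alpha)$ and $K_{\underline{\emptyset}}^o=\mathbb{C}$, the last big direct sum of \eqref{Eqn:Embedding1} contains the summand
$$
K_\beta^o\otimes K_{\underline{\emptyset}}^o\otimes H_\alpha^o \;=\; K_\beta^o\otimes H_\alpha^o,
$$
in which $k_\beta^o\otimes h_\alpha^o$ lives. By Definition \ref{Def:CFEmbeddings} the restriction of $\Lambda_{(T_\beta,S_\beta)}$ to this summand coincides with $S_\beta^{(\underline{\emptyset})}$ (with parameters $\iota=\beta$, $\kappa=\alpha$, $\underline{l}=\underline{\emptyset}$), and its explicit formula on a vector of the second type yields
$$
\Lambda_{(T_\beta,S_\beta)}\bigl(k_\beta^o\otimes h_\alpha^o\bigr) \;=\; \langle S_\beta k_\beta^o,\eta_\beta\rangle\, h_\alpha^o \;+\; \bigl(P_{\eta_\beta}^\perp S_\beta k_\beta^o\bigr)\otimes h_\alpha^o.
$$

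Next I would pair this against $k_\beta^o\otimes h_\alpha^o$. The first summand $\langle S_\beta k_\beta^o,\eta_\beta\rangle\, h_\alpha^o$ lies in $K_{\underline{\emptyset}}^o\otimes H_\alpha^o = H_\alpha^o$, which is a different, hence orthogonal, summand of \eqref{Eqn:Embedding1}, so it contributes $0$. For the second summand,
$$
\bigl\langle (P_{\eta_\beta}^\perp S_\beta k_\beta^o)\otimes h_\alpha^o,\; k_\beta^o\otimes h_\alpha^o\bigr\rangle \;=\; \langle P_{\eta_\beta}^\perp S_\beta k_\beta^o,\, k_\beta^o\rangle\,\|h_\alpha^o\|^2 \;=\; \langle S_\beta k_\beta^o,k_\beta^o\rangle,
$$
where the last step uses $\|h_\alpha^o\|=1$ together with $P_{\eta_\beta}^\perp k_\beta^o=k_\beta^o$ (since $k_\beta^o\in K_\beta^o\perp\mathbb{C}\eta_\beta$) and the self-adjointness of the projection. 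This gives the first identity, and the second follows by interchanging the roles of $\alpha$ and $\beta$ throughout.

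There is no serious obstacle here; the only delicate point is the bookkeeping inside the decomposition \eqref{Eqn:Embedding1}. One must be alert to the convention $K_{\underline{\emptyset}}^o=\mathbb{C}$, which causes two a priori different pieces (here $H_\alpha^o$ and $K_\beta^o\otimes H_\alpha^o$, both arising from $\underline{i}=\underline{\emptyset}\in I(\alpha;\alpha)$) to appear as genuine orthogonal summands of $H$; this is exactly why the ``scalar'' part of $S_\beta^{(\underline{\emptyset})}(k_\beta^o\otimes h_\alpha^o)$ is killed when paired back against the original vector, leaving only the clean expression $\langle S_\beta k_\beta^o, k_\beta^o\rangle$.
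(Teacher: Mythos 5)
Your argument is correct and is essentially the paper's own proof: both unfold the action of $S_\beta^{(\underline{\emptyset})}$ on $k_\beta^o\otimes h_\alpha^o$, discard the term landing in the orthogonal summand $H_\alpha^o$, and use $P_{\eta_\beta}^\perp k_\beta^o=k_\beta^o$ to conclude. The extra bookkeeping you supply about locating the summand inside \eqref{Eqn:Embedding1} is accurate but only makes explicit what the paper leaves implicit.
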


\begin{proof}

We only prove the first identity, since the proof of the second is  similar.

A direct computation  brings

\begin{eqnarray*}
\Lambda_{(T_\beta,S_\beta)}( k_\beta^o\otimes h_\alpha^o)=  \langle S_\beta k_\beta^o, \eta_\beta \rangle h_\alpha^o + (P_{\eta_\beta}^\perp S_\beta k_\beta^o)\otimes h_\alpha^o,
\end{eqnarray*}
hence
\begin{eqnarray*}
\langle \Lambda_{(T_\beta,S_\beta)} (k_\beta^o\otimes h_\alpha^o) , k_\beta^o\otimes h_\alpha^o \rangle &=&  \langle (P_{\eta_\beta}^\perp S_\beta k_\beta^o)\otimes h_\alpha^o , k_\beta^o\otimes h_\alpha^o \rangle \\
&=& \langle P_{\eta_\beta}^\perp S_\beta k_\beta^o , k_\beta^o \rangle \\
&=& \langle S_\beta k_\beta^o, P_{\eta_\beta}^\perp k_\beta ^o \rangle \\
&=& \langle S_\beta k_\beta^o , k_\beta^o \rangle.
\end{eqnarray*}

\end{proof}

We are now  ready to prove  the main theorem in this section.

\begin{thm}\label{Thm:FreeCopiesCFAlgebras}
Let $(\mathcal{A},\varphi, \psi)$ be a unital $*$-algebra with two states and assume that:
\begin{enumerate}
\item $\mathcal{A}=\textrm{span}\{ u\in \mathcal{A}: \textrm{$u$ is a unitary of $\mathcal{A}$  } \}$;

\item $\varphi$ is not a unital $*$-homomorphism.

\end{enumerate}

Given $\mathcal{A}_\alpha , \mathcal{A}_\beta \subset  \mathcal{A}$,  unital  $*$-sub-algebras, conditionally free w.r.t. $(\varphi, \psi)$, there are: a Hilbert space $H$; unit vectors $\xi, \tilde{\eta}$  in $H$ and unital, injective,  $*$-representations $\rho_{\alpha}: \mathcal{A} \to \mathbb{B}(H)$, $\rho_\beta : \mathcal{A} \to \mathbb{B}(H)$,  such that:
\begin{enumerate}
\item For all $a$ in $\mathcal{A}_\alpha$ and $b$  $\mathcal{A}_\beta$,
\begin{eqnarray*}
\varphi(a)= \langle \rho_\alpha(a) \xi, \xi \rangle , \quad \varphi(b)=\langle \rho_\beta(b)\xi, \xi \rangle, \\
\psi(a)= \langle \rho_\alpha(a) \tilde{\eta}, \tilde{\eta} \rangle , \quad \psi(b)=\langle \rho_\beta(b) \tilde{\eta}, \tilde{\eta} \rangle .
\end{eqnarray*}
\item $\rho_\alpha(\mathcal{A}_\alpha)$ and $\rho_\beta(\mathcal{A}_\beta)$ are c. free w.r.t. the vector states at ($\xi$, $\tilde{\eta}$).
\item $\rho_\alpha(\mathcal{A}_\alpha)$ and $\rho_\beta(\mathcal{A}_\beta)$ are free w.r.t the vector state at $\tilde{\eta}$.
\end{enumerate}

\end{thm}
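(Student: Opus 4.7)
The plan is to realize the two states as vector states on orthogonal direct summands of a larger Hilbert space. On one summand I place the c.\ free product construction of Theorem \ref{Coro:PsiProducts} --- the vacuum $\xi$ there implements $\varphi$ and satisfies the c.\ free factorization --- while on the other summand I place Voiculescu's reduced free product of two copies of the $\psi$-GNS representation, whose vacuum $\tilde\eta$ implements $\psi$ and satisfies freeness. Since $\xi$ and $\tilde\eta$ lie in orthogonal summands, each vector state is blind to the other construction, so the c.\ free and free properties hold simultaneously without interference.

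Concretely, first fix the GNS $*$-representations $(H_\pi,\xi_\pi,\pi)$ of $(\mathcal A,\varphi)$ and $(K_\sigma,\eta_\sigma,\sigma)$ of $(\mathcal A,\psi)$. Apply Theorem \ref{Coro:PsiProducts} with both input algebras taken to be $\mathcal A$ and both pairs of states equal to $(\varphi,\psi)$, using the above GNS data on each side. This produces a Hilbert space $H_1$ with vacuum $\xi_1$ and unital $*$-representations $\rho^{(1)}_\iota\colon\mathcal A\to\mathbb B(H_1)$, $\iota\in\{\alpha,\beta\}$, such that $\varphi_{\xi_1}\circ\rho^{(1)}_\iota=\varphi$ and
\[
\varphi_{\xi_1}\bigl(\rho^{(1)}_{\kappa_1}(a_1)\cdots\rho^{(1)}_{\kappa_n}(a_n)\bigr)=\varphi(a_1)\cdots\varphi(a_n)
\]
whenever $\kappa_1\neq\cdots\neq\kappa_n$ and $\psi(a_i)=0$. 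In parallel form Voiculescu's reduced free product $(H_2,\eta_2)$ of two copies of $(K_\sigma,\eta_\sigma)$, with the Voiculescu embeddings $\lambda_\iota\colon\mathbb B(K_\sigma)\to\mathbb B(H_2)$, and set $\rho^{(2)}_\iota:=\lambda_\iota\circ\sigma$. By Voiculescu's classical theorem, $\psi_{\eta_2}\circ\rho^{(2)}_\iota=\psi$ and the algebras $\rho^{(2)}_\alpha(\mathcal A),\rho^{(2)}_\beta(\mathcal A)$ are free with respect to $\psi_{\eta_2}$.

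Now set $H:=H_1\oplus H_2$, $\rho_\iota:=\rho^{(1)}_\iota\oplus\rho^{(2)}_\iota$, $\xi:=\xi_1\oplus 0$, $\tilde\eta:=0\oplus\eta_2$. The three conclusions of the theorem reduce to routine verifications: the state identifications hold because the summand not containing the reference vector contributes nothing to the vector state; for c.\ freeness of $\rho_\alpha(\mathcal A_\alpha),\rho_\beta(\mathcal A_\beta)$ the product $\rho_{\kappa_1}(a_1)\cdots\rho_{\kappa_n}(a_n)\xi$ lies in $H_1\oplus 0$, so the factorization proved in Theorem \ref{Coro:PsiProducts} applies directly to $a_i\in\mathcal A_{\kappa_i}\subset\mathcal A$ with $\psi(a_i)=0$; and for freeness with respect to $\psi_{\tilde\eta}$ the symmetric argument uses $\tilde\eta\in 0\oplus H_2$ together with Voiculescu's freeness.

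The main obstacle I foresee is injectivity of $\rho_\iota$: Lemma \ref{Lemma:PropertiesLambdaOp}(4) applied to $\rho^{(1)}_\iota(x)=\Lambda_{(\pi(x),\sigma(x))}$ only gives $\pi(x)=\sigma(x)=0$, which need not imply $x=0$ in a general $*$-algebra. I would handle this by adjoining a faithful $*$-representation $\tau$ of $\mathcal A$ as a third direct summand, with $\xi$ and $\tilde\eta$ vanishing on it so the preceding verifications are unaffected. This is the natural place for the two hypotheses to enter: assumption (1) (spanning by unitaries) together with the unitary conjugates $\varphi\circ\mathrm{Ad}(u),\psi\circ\mathrm{Ad}(u)$ as $u$ ranges over unitaries of $\mathcal A$ provides enough states whose combined GNS representations separate points, yielding such a $\tau$, while assumption (2) ($\varphi$ not a character) ensures $H_\pi^o\neq\{0\}$ so that the c.\ free product $H_1$ carries the non-degenerate structure required by Theorem \ref{Coro:PsiProducts} and by the agreement lemmas preceding this theorem.
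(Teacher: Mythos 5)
Your construction is correct for the three stated conclusions, but it is genuinely different from the paper's. The paper builds a \emph{single} c.\ free product space and finds both distinguished vectors inside it: it doubles the $\psi$-GNS space ($K_\beta:=K_\alpha\oplus K_\alpha$) precisely to manufacture a unit vector $\eta_\beta^\perp\in K_\beta^o$ that still implements $\psi$, uses hypothesis (2) to produce a unit vector $h_\alpha^o\in H_\alpha^o$, and sets $\tilde\eta:=\eta_\beta^\perp\otimes h_\alpha^o$, a vector living in the summand $K_\beta^o\otimes H_\alpha^o$ of the same space $H$ that contains $\xi$. Lemmas \ref{Lemma:AgreementWRTeta} and \ref{Lemma:AgreementWRTtildeeta} then show this $\tilde\eta$ recovers $\psi$ on both copies, and freeness at $\tilde\eta$ is proved by an explicit inductive computation (Eq.\ \eqref{Eqn:ProofFreeIndependeceAttildeeta}). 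You instead decouple the two states onto orthogonal invariant summands $H_1\oplus H_2$ and outsource freeness to Voiculescu's classical theorem. Since conclusions (1)--(3) only ever evaluate the two vector states separately, the decoupled model does satisfy them; your route is shorter, avoids the doubling trick and Lemmas \ref{Lemma:AgreementWRTeta}--\ref{Lemma:AgreementWRTtildeeta} entirely, and in fact does not need hypothesis (2) at all. What the paper's approach buys is the structural point it is advertising: the second state can be realized \emph{inside} the c.\ free product of pointed Hilbert spaces itself, on the same cyclic-type representation, rather than on an artificially adjoined orthogonal block with a large reducing subspace.

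Two caveats. First, you misplace the role of hypothesis (1): its primary job (as in the paper, via Proposition 7.2 of \cite{NS}) is to guarantee that the GNS constructions for $\varphi$ and $\psi$ on the mere $*$-algebra $\mathcal{A}$ yield representations by \emph{bounded} operators in the first place --- without it neither $\pi$, $\sigma$, nor Theorem \ref{Coro:PsiProducts} (stated for $C^*$-algebras) is available; it is not primarily about manufacturing the auxiliary faithful $\tau$. Relatedly, your stated reason for hypothesis (2) (``non-degeneracy of $H_1$'') is not what it is for; in your construction it is simply unnecessary. Second, on injectivity: you are right that $\Lambda_{(\pi(x),\sigma(x))}=0$ only yields $\pi(x)=\sigma(x)=0$, and your proposed fix (adjoining a faithful summand built from the states $\varphi\circ\mathrm{Ad}(u)$) is not substantiated --- the hypotheses do not obviously provide a point-separating family of states. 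The paper's own proof has exactly the same gap, deriving injectivity from Lemma \ref{Lemma:PropertiesLambdaOp}(4) without addressing faithfulness of the GNS data, so this is a shared defect rather than a flaw specific to your argument.
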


\begin{proof}

The assumption
$$
\mathcal{A}=\textrm{span}\{ u\in \mathcal{A}: \textrm{$u$ is a unitary of $\mathcal{A}$  } \}
$$
and the GNS representation imply  that there are    pointed Hilbert spaces $(H_\alpha, \xi_\alpha),(K_\alpha, \eta_\alpha) $ and unital $*$-representations $\tau_\alpha: \mathcal{A} \to \mathbb{B}(H_\alpha), \sigma_\alpha:\mathcal{A} \to \mathbb{B}(K_\alpha)$, such that, for all $x\in \mathcal{A}$,
\begin{eqnarray}
\varphi(x)=\langle \tau_\alpha (x)\xi_\alpha,\xi_\alpha\rangle, \psi(x)=\langle \sigma_\alpha(x) \eta_\alpha, \eta_\alpha \rangle \label{Eqn:ExtentionAlphaState}
\end{eqnarray}
(see Proposition 7.2 in \cite{NS}).

Next, we take $H_\beta:=H_\alpha\oplus H_\alpha$, $K_\beta:=K_\alpha\oplus K_\alpha$, $\xi_\beta:=(\xi_\alpha,0), \eta_\beta:=(\eta_\alpha,0)$. Consider the representations $\tau_\beta: \mathcal{A}\to \mathbb{B}(H_\beta)$ and
$\sigma_\beta: \mathcal{A}\to \mathbb{B}(K_\beta)$ given by $\tau_\beta(x):=\tau_\alpha(x)\oplus \tau_\alpha(x)$ and $\sigma_\beta(x):=\sigma_\alpha(x)\oplus \sigma_\alpha(x)$, respectively. Then, we also have that for all $x\in \mathcal{A}$,
\begin{eqnarray}
\varphi(x)=\langle \tau_\beta (x)\xi_\beta,\xi_\beta\rangle, \psi(x)=\langle \sigma_\beta(x) \eta_\beta, \eta_\beta \rangle = \langle \sigma_\beta(x)\eta_\beta^\perp, \eta_\beta^\perp \rangle \label{Eqn:ExtentionBetaState},
\end{eqnarray}
where we denote by $\eta_\beta^\perp :=(0,\eta_\alpha)$ and notice that $\eta_\beta^\perp $ is a unit vector in $K_\beta^o$.

Now, we construct the c. free product of pointed Hilbert spaces:
$$
((H,\xi), (K,\eta)):=((H_\alpha,\xi_\alpha), (K_\alpha, \eta_\alpha))*((H_\beta,\xi_\beta),(K_\beta, \eta_\beta)).
$$

Since $\varphi$ is not a unital homomorphism, $\dim(H_\alpha)>1$. Let  $h_\alpha^o \in H_\alpha^o$ be  a unit vector. Now we set $\tilde{\eta}:=\eta_\beta^\perp \otimes h_\alpha^o$.

We define the maps   $\rho_\iota : \mathcal{A}  \to  \mathbb{B}(H) $ given by $\rho_\iota(x):=\Lambda_{(\tau_\iota(x), \sigma_\iota(x))} $, $\iota=\alpha,\beta$. In a similar way as in the proof of Theorem \ref{Coro:PsiProducts}, it can be  showed that these maps are  unital, injective, $*$-representations of $\mathcal{A}$.
Hence, as unital $*$-algebras, $\mathcal{A}_\alpha$ is isomorphic to  $\rho_\alpha(\mathcal{A}_\alpha)$ and  $\mathcal{A}_\beta$ is isomorphic to $\rho_\beta(\mathcal{A}_\beta)$.

From Lemma \ref{Lemma:AgreementWRTxi}, \eqref{Eqn:ExtentionAlphaState}  and \eqref{Eqn:ExtentionBetaState}  it follows that for any $a$ in $\mathcal{A}_\alpha$ and $b$ in $\mathcal{A}_\beta$

$$
 \langle \rho_\alpha(a)  \xi, \xi \rangle =\varphi(a),   \quad \langle \rho_\beta(b ) \xi, \xi \rangle=\varphi(b).
$$

Lemma \ref{Lemma:AgreementWRTeta} along with \eqref{Eqn:ExtentionAlphaState} imply that, for all $a$ in $\mathcal{A}_\alpha$,
$$
\langle \rho_\alpha(a) \tilde{\eta}, \tilde{\eta} \rangle = \psi(a).
$$

Now, for all $b$ in $\mathcal{A}_\beta$, we prove that
$$\langle \rho_\beta(b) \tilde{\eta}, \tilde{\eta} \rangle=\psi(b). $$
This is a consequence of  Lemma \ref{Lemma:AgreementWRTtildeeta} and \eqref{Eqn:ExtentionBetaState}, because
\begin{eqnarray*}
\langle \rho_\beta(b) \tilde{\eta}, \tilde{\eta} \rangle&=& \langle \Lambda_{(\tau_\beta(b),\sigma_\beta(b))} \tilde{\eta}, \tilde{\eta} \rangle \\
&=&  \langle (\Lambda_{(\tau_\beta(b), \sigma_\beta(b))}(\eta_\beta^\perp \otimes h_\alpha^o) , \eta^\perp_\beta\otimes h_\alpha^o \rangle \\
&=& \langle \sigma_\beta(b)\eta_\beta^\perp , \eta_\beta^\perp\rangle \\
&=& \psi(b).
\end{eqnarray*}

From Lemma \ref{Lemma:CFAlternantingWordsAtxi}, it follows that $\rho_\alpha(\mathcal{A}_\alpha)$ and $\rho_\beta(\mathcal{A}_\beta)$
are c. free w.r.t. the vector states at $(\xi, \tilde{\eta})$.

What remains to be proven is that $\rho_\alpha(\mathcal{A}_\alpha)$ and $\rho_\beta(\mathcal{A}_\beta)$ are free w.r.t. the vector state at $\tilde{\eta}$, i.e. they satisfy Definition \ref{Voicu}. It is, therefore, necessary to consider alternating products of elements of   $\rho_\alpha(\mathcal{A}_\alpha)$ and $\rho_\beta(\mathcal{A}_\beta)$. In this proof, we only analyze products that start with elements of  $\rho_\alpha(\mathcal{A}_\alpha)$ and end with elements of $\rho_\beta(\mathcal{A}_\beta)$. This, of course, does not cover all possible cases, but the other cases can be studied similarly. The desired result is a direct consequence of Eq.  \eqref{Eqn:ProofFreeIndependeceAttildeeta} below.  Now we introduce some notation in order to simplify our formulas.

We denote by $\Lambda_a$ the elements  of $\rho_\alpha(\mathcal{A}_\alpha)$ and by $\Lambda_b$ the elements of 
$\rho_\beta(\mathcal{A}_\beta)$. Hence, we can write
$$
\Lambda_a=\Lambda_{(T_a, S_a)}, \quad \Lambda_b=\Lambda_{(T_b, S_b)},
$$
where $T_a=\tau_\alpha(a), S_a=\sigma_\alpha(a), T_b=\tau_\beta(b), S_b=\sigma_\beta(b)$.

 We show that 
\begin{eqnarray}
\Lambda_{b(r)}\Lambda_{a(r)}\cdots \Lambda_{b(1)}\Lambda_{a(1)}\tilde{\eta} &=& (P_{\eta_\beta}^\perp S_{b(r)}\eta_\beta)\otimes (P_{\eta_\alpha}^\perp S_{a(r)}\eta_\alpha)\otimes   \cdots  \nonumber \\
& \cdots & \otimes (P_{\eta_\beta}^\perp S_{b(1)}\eta_\beta) \otimes (P_{\eta_\alpha}^\perp S_{a(1)}\eta_\alpha)\otimes \tilde{\eta},
\label{Eqn:ProofFreeIndependeceAttildeeta}
\end{eqnarray}
whenever $r\geq 1$, $a(1), \dots , a(r)$ are elements in $\mathcal{A}_{\alpha}$ and $b(1),\dots, b(r)$ are elements in $\mathcal{A}_\beta$ that satisfy
$$
\langle \Lambda_{a(j)} \tilde{\eta}, \tilde{\eta}\rangle =0=\langle \Lambda_{b(j)}\tilde{\eta}, \tilde{\eta}\rangle , \quad \textrm{for all $j=1,\dots, r$.}
$$

To prove \eqref{Eqn:ProofFreeIndependeceAttildeeta} we proceed by induction in $r$.

For $r=1$, we have:

\begin{eqnarray*}
\Lambda_{(T_{a(1)}, S_{a(1)})}\tilde{\eta}&=&\langle S_{a(1)}\eta_\alpha, \eta_\alpha \rangle \tilde{\eta}+ (P_{\eta_\alpha}^\perp S_{a(1)}\eta_\alpha)\otimes \tilde{\eta} \\
&=& 
\langle \Lambda_{a(1)}\tilde{\eta}, \tilde{\eta}\rangle            
    \tilde{\eta}  
 + (P_{\eta_\alpha}^\perp S_{a(1)}\eta_\alpha)\otimes \tilde{\eta}\\
&=& (P_{\eta_\alpha}^\perp S_{a(1)}\eta_\alpha)\otimes \tilde{\eta},
\end{eqnarray*}

which implies

\begin{eqnarray*}
\Lambda_{(T_{b(1)}, S_{b(1)})}   
\Lambda_{(T_{a(1)}, S_{a(1)})} 
\tilde{\eta}&=& \Lambda_{(T_{b(1)}, S_{b(1)})}((P_{\eta_\alpha}^\perp S_{a(1)}\eta_1)\otimes \tilde{\eta}) \\
&=&  \langle S_{b(1)} \eta_\beta, \eta_\beta \rangle (P_{\eta_\alpha}^\perp S_{a(1)}\eta_\alpha)\otimes \tilde{\eta}
\\
&+& (P_{\eta_\beta}^\perp S_{b(1)}\eta_\beta)\otimes (P_{\eta_\alpha}^\perp S_{a(1)}\eta_\alpha)\otimes \tilde{\eta}\\
&=& \langle \Lambda_{b(1)} \tilde{\eta}, \tilde{\eta} \rangle (P_{\eta_\alpha}^\perp S_{a(1)}\eta_\alpha)\otimes \tilde{\eta}\\
&+& (P_{\eta_\beta}^\perp S_{b(1)}\eta_\beta)\otimes (P_{\eta_\alpha}^\perp S_{a(1)}\eta_\alpha)\otimes \tilde{\eta} \\
&=& (P_{\eta_\beta}^\perp S_{b(1)}\eta_\beta)\otimes (P_{\eta_\alpha}^\perp S_{a(1)}\eta_\alpha)\otimes \tilde{\eta}.
\end{eqnarray*}

Now, assume that \eqref{Eqn:ProofFreeIndependeceAttildeeta}  holds and let
$$
X:=(P_{\eta_\beta}^\perp S_{b(r)}\eta_\beta)\otimes (P_{\eta_\alpha}^\perp S_{a(r)}\eta_\alpha)\otimes   \cdots  \otimes (P_{\eta_\beta}^\perp S_{b(1)}\eta_\beta) \otimes (P_{\eta_\alpha}^\perp S_{a(1)}\eta_\alpha)\otimes \tilde{\eta}.
$$

Then
\begin{eqnarray*}  
\Lambda_{(T_{a(r+1)}, S_{a(r+1)} )}    
(X)
&=& \langle S_{a(r+1)}\eta_\alpha, \eta_\alpha \rangle X +(P_{\eta_\alpha}^\perp S_{a(r+1)}\eta_\alpha)\otimes X \\
&=&(P_{\eta_\alpha}^\perp S_{a(r+1)}\eta_\alpha)\otimes X.
\end{eqnarray*}
and
\begin{eqnarray*}
\Lambda_{(T_{b(r+1)}, S_{b(r+1)})}\Lambda_{(T_{a(r+1)}, S_{a(r+1)})}(X)&=&\langle S_{b(r+1)} \eta_\beta, \eta_\beta \rangle(P_{\eta_\alpha}^\perp S_{a(r+1)}\eta_\alpha)\otimes X   \\
&+& (P_{\eta_\beta}^\perp S_{b(r+1)}\eta_\beta)\otimes (P_{\eta_\alpha}^\perp S_{a(r+1)}\eta_\alpha)\otimes X  \\
&=&(P_{\eta_\beta}^\perp S_{b(r+1)}\eta_\beta)\otimes (P_{\eta_\alpha}^\perp S_{a(r+1)}\eta_\alpha)\otimes X . 
\end{eqnarray*}

\end{proof}

\section{The $^cR$-transform and its linearization property}\label{SEction5}

As an application of Theorem \ref{Thm:FreeCopiesCFAlgebras}  we give an alternative proof of the linearization property of the conditionally free $R$-transform. We follow the proof of Haagerup  in \cite{Haa} and in order to avoid repetitions we only present the arguments that use  tools  introduced in this paper.   In  this section,  we denote by $(\mathcal{A}, \varphi, \psi)$  a unital  $C^*$-algebra with two states.

\begin{notation} 
For an element  $a$ in $\mathcal{A}$ and  a complex number  $t$ with $|t|< \frac{1}{\|a\|}$
, denote
\begin{eqnarray*}
h_a(t)&:=& \psi((1-ta)^{-1}), \quad \tilde{h}_a(t):= \varphi((1-ta)^{-1}),\\
k_a(t)&:=&th_a(t),\quad \tilde{k}_a(t):=t\tilde{h}_a(t).
\end{eqnarray*}
\end{notation}

The following lemma is essentially  Proposition 3.1-(a)  in \cite{Haa} and the only  important observation to make is that the bijectivity of the functions $k_a, \tilde{k}_a$ does not rely on the state that we use to define them.

\begin{lemma}
$k_a$ and $\tilde{k}_a$  are bijections from  $B_{\frac{1}{4\|a\|}}(0)$ onto a neighbourhood of 0 which contains $B_{\frac{1}{6\|a\|}}(0)$.  
Here we use the notation $  B_r(z) := \{  t \in \mathbb{C}  :   |t - z| < r \} .$
\end{lemma}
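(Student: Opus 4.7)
The plan is to treat $k_a$ and $\tilde{k}_a$ uniformly: both have the form $k(t) = t\omega((1-ta)^{-1})$ for a state $\omega$ on the $C^*$-algebra $\mathcal{A}$, and the only property of $\omega$ used below is the bound $|\omega(a^n)| \leq \|a\|^n$. This is precisely the observation highlighted just before the lemma, so one argument handles both functions.

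Expanding the resolvent as a Neumann series yields
\begin{equation*}
k(t) = t + \sum_{n\geq 2} \omega(a^{n-1})\, t^{n},
\end{equation*}
a function holomorphic on $B_{1/\|a\|}(0)$ with $k(0)=0$ and $k'(0)=1$. Setting $f(t):=k(t)-t$, summation of the resulting geometric-type series gives, for $|t|\leq 1/(4\|a\|)$,
\begin{equation*}
|f(t)| \leq \frac{|t|^{2}\|a\|}{1-|t|\|a\|} \leq \frac{1}{12\|a\|}, \qquad |f'(t)| \leq \frac{|t|\|a\|\,(2-|t|\|a\|)}{(1-|t|\|a\|)^{2}} \leq \frac{7}{9}.
\end{equation*}
These two bounds drive everything.

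Injectivity on $B_{1/(4\|a\|)}(0)$ is then immediate from $|k(t_1)-k(t_2)| \geq |t_1-t_2| - |f(t_1)-f(t_2)| \geq (1-7/9)|t_1-t_2|$. For the image, fix $w$ with $|w|<1/(6\|a\|)$ and define $T_w(t):=w-f(t)$; the bound on $|f|$ forces $|T_w(t)| \leq 1/(6\|a\|) + 1/(12\|a\|) = 1/(4\|a\|)$, so $T_w$ sends $\overline{B_{1/(4\|a\|)}(0)}$ into itself, and the bound on $|f'|$ makes it a $(7/9)$-contraction; Banach's fixed point theorem then supplies a unique $t\in B_{1/(4\|a\|)}(0)$ with $k(t)=w$. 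No conceptual obstacle arises: the constants $1/(4\|a\|)$ and $1/(6\|a\|)$ in the statement are tuned exactly so that the contraction ball and the injectivity ball coincide, and the argument depends on $\omega$ only through $\|\omega\| = 1$.
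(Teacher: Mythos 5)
Your argument is correct and complete, so the comparison here is really between a full proof and a citation: the paper does not prove this lemma at all, but instead points to Proposition 3.1(a) of Haagerup's paper \cite{Haa} and records only the one observation needed to transplant it, namely that the proof uses nothing about the state beyond $|\omega(a^n)|\leq\|a\|^n$. That is exactly the observation you isolate and then exploit, so in effect you have written out the argument the paper delegates: expand $k(t)=t+f(t)$ with $f(t)=\sum_{n\geq 2}\omega(a^{n-1})t^n$, bound $|f|\leq\tfrac{1}{12\|a\|}$ and $|f'|\leq\tfrac{7}{9}$ on the closed ball of radius $\tfrac{1}{4\|a\|}$, get injectivity from the reverse triangle inequality, and get surjectivity onto $B_{\frac{1}{6\|a\|}}(0)$ from the contraction $T_w(t)=w-f(t)$. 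This is the same circle of estimates Haagerup uses (he phrases the surjectivity step via Rouch\'e's theorem rather than Banach's fixed point theorem, but the power-series bounds are identical), so what your version buys is self-containedness and an explicit display of why the constants $\tfrac{1}{4\|a\|}$ and $\tfrac{1}{6\|a\|}$ fit together. Two cosmetic points only: the fixed point of $T_w$ lives a priori in the \emph{closed} ball, and one should note that the strict inequality $|w|<\tfrac{1}{6\|a\|}$ forces $|t|=|T_w(t)|<\tfrac{1}{4\|a\|}$, placing it in the open ball as claimed; and the image is automatically a neighbourhood of $0$ once it contains $B_{\frac{1}{6\|a\|}}(0)$, so no separate open-mapping argument is needed.
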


Using this lemma we can give a definition for the conditionally free $R$-transform, with respect to $(\varphi,\psi)$.

\begin{definition}\label{Def:cRCstarAlg}
For $a$ in $\mathcal{A}$,  its c. free $R$-transform, denoted by $R^{(\varphi,\psi)}_a$ or $^cR_a$, is defined by 
\begin{eqnarray}\label{Eqn:cR-T}
R_{a}^{(\varphi,\psi)}(z)=\left\{ \begin{array}{cc}
\frac{1}{k_a^{-1}(z)}-\frac{1}{\tilde{k}_a(k_a^{-1}(z))} &  0<|z|< \frac{1}{6\|a\|}, \\
\empty \\
\varphi(a) & z=0.
\end{array}
\right.
\end{eqnarray}

We denote by
$$
G_a^{\psi}(\lambda ):=\sum_{n=0}^\infty \lambda^{-n-1}\psi(a^n)=\psi((\lambda 1_\mathcal{A}-a)^{-1}),   \hspace{2cm} |\lambda| >\|a\| ,
$$
the Cauchy transform of $a$ with respect to $\psi$, which is  an analytic function.
 It is straightforward to  verify that $ k_a(t) = G_{a}^{\psi}(1/t) $ and, therefore,    $(G_{a}^\psi)^{-1}(z)=\frac{1}{k_a^{-1}(z)}$. Hence we can rewrite \eqref{Eqn:cR-T} as
\begin{align}\label{Exta}
R_a^{(\varphi,\psi)}(G_a^\psi(\lambda))=\lambda - \frac{1}{G_a^\varphi(\lambda)},
\end{align}
where we use that  $  \tilde k_a(t) = G_{a}^{\varphi}(1/t)    $. Eq. \eqref{Exta}  is the analytic description given in Theorem 5.2 of \cite{BLS}.
\end{definition}

\begin{definition}
Take $a, b \in \mathcal{A}$   and let $\mathcal{A}_a$ and $\mathcal{A}_b$ denote the unital $C^*$-algebras generated by $a$ and $b$ respectively. We say that $a$ and $b$ are c. free with respecto to $(\varphi, \psi)$ if $\mathcal{A}_a$ and $\mathcal{A}_b$ are c. free with respecto to $(\varphi, \psi)$.
\end{definition}

\begin{lemma}
Let $a$, $b$ be two  elements c. free with respect to   $(\varphi,\psi)$.  Denote
\begin{eqnarray}\label{ab}
a(t_1)&=&(1-t_1a)^{-1}-h_a(t_1)1_\mathcal{A}, \quad |t_1|< \frac{1}{\|a\|}, \notag \\
b(t_2)&=&(1-t_2b)^{-1}-h_b(t_2)1_\mathcal{A}, \quad |t_2|< \frac{1}{\|b\|}.  
\end{eqnarray}

Then
\begin{enumerate}
\item $a(t_1), b(t_2)$ are c. free with respect to $(\varphi,\psi)$  and  $\psi(a(t_1))=\psi(b(t_2))=0$.
\item For  every  $\rho \in \mathbb{C}$:
$$
(1-t_1a)(1-\rho a(t_1)b(t_2))(1-t_2b)=c_0+c_1a+c_2b+c_3ab
$$
where:
\begin{eqnarray*}
c_0=c_0(\rho, t_1,t_2)&=&1-\rho(h_a(t_1)-1)(h_b(t_2)-1), \\
c_1=c_1(\rho, t_1,t_2)&=&-t_1(1+\rho h_a(t_1)-\rho h_a(t_1)h_b(t_2)), \\
c_2=c_2(\rho, t_1,t_2)&=&-t_2(1+\rho h_b(t_2)-\rho h_a(t_1)h_b(t_2)), \\
c_3=c_3(\rho, t_1,t_2)&=&t_1t_2(1-\rho h_a(t_1)h_b(t_2)).
\end{eqnarray*}

\item Further assume $\| \rho a(t_1)b(t_2)\| <1$. Then $c_0+c_1a+c_2b+c_3ab$ is invertible and
$$
\varphi((c_0+c_1a+c_2b+c_3ab)^{-1})=\frac{\tilde{h}_a(t_1)\tilde{h}_b(t_2)}{1-\rho [\tilde{h}_a(t_1)-h_a(t_1)][\tilde{h}_b(t_1)-h_b(t_2)]}.
$$

\end{enumerate}

\end{lemma}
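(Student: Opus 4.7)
For part (1), since $(1-t_1 a)^{-1} = \sum_{n\geq 0} t_1^n a^n$ in operator norm, the element $a(t_1)$ belongs to the unital $C^*$-algebra $\mathcal{A}_a$ generated by $a$; analogously $b(t_2) \in \mathcal{A}_b$. As $\mathcal{A}_a$ and $\mathcal{A}_b$ are c.\ free with respect to $(\varphi, \psi)$, the sub-algebras generated by $a(t_1)$ and $b(t_2)$ inherit c.\ freeness. The $\psi$-centering is immediate from linearity: $\psi(a(t_1)) = \psi((1-t_1 a)^{-1}) - h_a(t_1) = h_a(t_1) - h_a(t_1) = 0$, and similarly $\psi(b(t_2)) = 0$.

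Part (2) is a direct algebraic expansion. The handy identities I would record first are
\[
(1-t_1 a)\, a(t_1) = 1 - h_a(t_1) + t_1 h_a(t_1)\, a, \qquad b(t_2)(1-t_2 b) = 1 - h_b(t_2) + t_2 h_b(t_2)\, b,
\]
obtained by clearing the inverses. Substituting into
\[
(1-t_1 a)(1-\rho a(t_1)b(t_2))(1-t_2 b) = (1-t_1 a)(1-t_2 b) - \rho\,[(1-t_1 a)a(t_1)]\,[b(t_2)(1-t_2 b)]
\]
and collecting the coefficients of $1,a,b,ab$ produces the stated $c_0, c_1, c_2, c_3$; there is no obstacle here beyond care with signs.

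For part (3), invertibility follows because each of the three factors on the left-hand side of the identity in (2) is invertible (from $|t_1|<1/\|a\|$, $|t_2|<1/\|b\|$, and $\|\rho a(t_1)b(t_2)\|<1$), so
\[
(c_0 + c_1 a + c_2 b + c_3 ab)^{-1} = (1-t_2 b)^{-1} \Bigl(\sum_{n=0}^{\infty} \rho^n (a(t_1)b(t_2))^n\Bigr)(1-t_1 a)^{-1}.
\]
I would then rewrite $(1-t_1 a)^{-1} = a(t_1) + h_a(t_1)\,1_{\mathcal{A}}$ and $(1-t_2 b)^{-1} = b(t_2) + h_b(t_2)\,1_{\mathcal{A}}$, distribute, and apply $\varphi$ term by term. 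Every non-constant summand is an alternating word in the $\psi$-centered elements $a(t_1), b(t_2)$, and c.\ freeness from part (1) lets $\varphi$ factor across such words.

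The main obstacle is the bookkeeping at each order $n$: four word-types appear, namely $(a(t_1)b(t_2))^n$ together with its three padded variants ($b(t_2)(a(t_1)b(t_2))^n$, $(a(t_1)b(t_2))^n a(t_1)$, $b(t_2)(a(t_1)b(t_2))^n a(t_1)$), each contributing different monomials in $u := \varphi(a(t_1)) = \tilde h_a(t_1) - h_a(t_1)$ and $v := \varphi(b(t_2)) = \tilde h_b(t_2) - h_b(t_2)$. The key collapse I expect is
\[
u^{n+1}v^{n+1} + h_a(t_1)\,u^n v^{n+1} + h_b(t_2)\,u^{n+1}v^n + h_a(t_1) h_b(t_2)\,u^n v^n = u^n v^n\,\tilde h_a(t_1)\,\tilde h_b(t_2),
\]
valid because $u + h_a(t_1) = \tilde h_a(t_1)$ and $v + h_b(t_2) = \tilde h_b(t_2)$. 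Summing the resulting geometric series $\sum_{n\geq 0}\rho^n u^n v^n$ then yields the claimed formula.
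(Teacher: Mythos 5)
Your proposal is correct and follows essentially the same route as the paper: part (1) via membership in the generated $C^*$-algebras, part (2) by direct expansion, and part (3) by inverting the factored identity, expanding $(1-\rho a(t_1)b(t_2))^{-1}$ as a Neumann series, and using c.\ freeness of the $\psi$-centered elements to factor $\varphi$ over alternating words (the paper organizes the sum as four series $f_0,\dots,f_3$ rather than collapsing order by order, but the computation is identical). The only detail worth adding is the paper's observation that $|\rho\,\varphi(a(t_1))\varphi(b(t_2))| \le \|\rho\, a(t_1)b(t_2)\| < 1$, which justifies convergence of the scalar geometric series you sum at the end.
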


\begin{proof}

For the first part, notice that $a(t_1)$  and $b(t_2)$ lie in the unital $C^*$-algebras generated by $a$ and $b$, respectively, hence these elements are      c. free.  On the other hand, it is  straightforward to prove that $\psi(a(t_1))= \psi(b(t_2)   =0$.

Part (2) is a direct algebraic computation as in Lemma 3.3. in  \cite{Haa}.

For part (3), 
 notice that Item $(1)$ implies that  
 $   \varphi(a(t_1)) \varphi(b(t_2)) =  \varphi(a(t_1) b(t_2))   $ and, therefore,   $|\rho \varphi(a(t_1)) \varphi(b(t_2))|\leq \| \rho a(t_1)b(t_2) \|  <1$. It follows that $1-   \rho \varphi(a(t_1))\varphi(b(t_2)) $ and $1-\rho a(t_1)b(t_2)$ are invertible. Then we can write
\begin{eqnarray*}
(c_0+c_1a+c_2b+c_3ab)^{-1}&=&  (1-t_2b)^{-1}(1-\rho a(t_1)b(t_2))^{-1}(1-t_1a)^{-1} \\
&=& (h_b(t_2)1_\mathcal{A}+b(t_2))(1-\rho a(t_1)b(t_2))^{-1}\\
& & (h_a(t_1)1_\mathcal{A}+a(t_1))\\
&=& f_0+f_1+f_2+f_3,
\end{eqnarray*}
where:
\begin{eqnarray*}
f_0&=&h_a(t_1)h_b(t_2) \sum_{n=0}^\infty \rho^n (a(t_1)b(t_2))^n, \\
f_1&=&h_b(t_2)\sum_{n=0}^\infty \rho^n (a(t_1)b(t_2))^n a(t_1 ), \\
f_2&=&h_a(t_1)\sum_{n=0}^\infty \rho^n b(t_2) (a(t_1)b(t_2))^n, \\
f_3&=&\sum_{n=0}^\infty \rho^n b(t_2)(a(t_1)b(t_2))^n a(t_1).
\end{eqnarray*}

Since $\psi(a(t_1))=\psi(b(t_2))=0$ and $a(t_1),b(t_2)$ are  c. free  w.r.t. $(\varphi, \psi )$ we obtain
$$
\varphi \left( \sum_{n=0}^\infty  ( \rho a(t_1)b(t_2))^n\right)=\sum_{n=0}^\infty \rho^n \varphi(a(t_1))^n \varphi(b(t_2))^n=(1-   \rho\varphi(a(t_1))\varphi(a(t_2)))^{-1}
$$
By the definition of $\tilde{h}_a$ and $\tilde{h}_b $ and Eq. \eqref{ab}, 
$$
D:=\varphi(a(t_1))\varphi(b(t_2))=[\tilde{h}_a(t_1)-h_a(t_1)]
[ \tilde{h}_b(t_2)  -h_b(t_2)].
$$

It follows that (here we use again that $\psi(a(t_1))=\psi(b(t_2))=0$ and $a(t_1),b(t_2)$ are    c. free   w.r.t. $(\varphi, \psi )$),

\begin{eqnarray*}
\varphi(f_0)&=&h_a(t_1)h_b(t_2)(1-\rho D)^{-1}, \\
\varphi(f_1)&=&h_b(t_2)[\tilde{h}_a(t_1)-h_a(t_1)](1-\rho D)^{-1},\\
\varphi(f_2)&=&h_a(t_1)[\tilde{h}_b(t_2)-h_b(t_2)](1-\rho D)^{-1},\\
\varphi(f_3)&=&  D(1-\rho D)^{-1}.
\end{eqnarray*}

A direct computation shows
\begin{eqnarray*}
\tilde{h}_a(t_1)\tilde{h}_b(t_2) &=& h_a(t_1)h_b(t_2)+  
[\tilde{h}_a(t_1)-h_a(t_1)]h_b(t_2) \\  
&+&  h_a(t_1)[\tilde{h}_b(t_2)-h_b(t_2)]\\
&+&    [\tilde{h}_a(t_1)-h_a(t_1)]  [\tilde{h}_b(t_2)-h_b(t_2)] .
\end{eqnarray*}

\end{proof}

A minor modification of the proof of  Theorem 3.4  in \cite{Haa}  proves the next lemma.

\begin{lemma}\label{Lemma:IdentityTildeha+b}
Assume $a$ and $b$ are  c. free with respect to $(\varphi,\psi)$ and $|t_1|<\frac{1}{4\|a\|}, |t_2|< \frac{1}{4\|b\|}$.

Then $h_a(t_1)\neq0, h_b(t_2)\neq0$, $h_a(t_1)+h_b(t_2) -1 \neq0$.

Even more, if 
\begin{align}\label{Hypo}
t_1h_a(t_1)=t_2h_b(t_2) 
\end{align} 
then

\begin{enumerate}
\item the number 
$$t:=\frac{t_1h_a(t_1)}{h_a(t_1)+h_b(t_2)-1}=\frac{t_2h_b(t_2)}{h_a(t_1)+h_b(t_2)-1},$$
satisfies $|t|<\frac{1}{\|a+b\|}.$
\item for $t$ as before it also holds
$$
\tilde{h}_{a+b}(t)=[h_a(t_1)+h_b(t_2)-1] \frac{\tilde{h}_a(t_1)\tilde{h}_b(t_2)}{h_a(t_1)\tilde{h}_b(t_2)+   h_b(t_2)\tilde{h}_a(t_1) -\tilde{h}_a(t_1)\tilde{h}_b(t_2)}.
$$
\end{enumerate}

\end{lemma}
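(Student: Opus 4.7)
The plan is to specialize the free parameter $\rho$ in the preceding lemma so that the quadratic expression $c_0 + c_1 a + c_2 b + c_3 ab$ collapses to a scalar multiple of $1_{\mathcal{A}} - t(a+b)$, after which applying $\varphi$ to the inverse recovers $\tilde{h}_{a+b}(t)$. Concretely, the choice $\rho = 1/(h_a(t_1) h_b(t_2))$ forces $c_3 = 0$, and under the hypothesis $t_1 h_a(t_1) = t_2 h_b(t_2)$ one computes $c_1 = c_2 = -t_1/h_b(t_2)$, so that
$$c_0 + c_1 a + c_2 b + c_3 ab = \frac{h_a(t_1) + h_b(t_2) - 1}{h_a(t_1) h_b(t_2)}\bigl(1_{\mathcal{A}} - t(a+b)\bigr)$$
with $t$ as in the statement. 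Thus the algebraic reduction is forced, and the whole argument splits into (i) justifying that every quantity appearing here is non-zero, (ii) verifying the analytic bounds $|t| < 1/\|a+b\|$ and $\|\rho a(t_1) b(t_2)\| < 1$, and (iii) matching coefficients.

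For the non-vanishing claims I would use the standard estimate: in the disk $|t_1| < 1/(4\|a\|)$ one has $\|t_1 a\| \leq 1/4$, hence
$$\bigl\|(1-t_1 a)^{-1} - 1_{\mathcal{A}}\bigr\| \leq \sum_{n \geq 1} 4^{-n} = \tfrac{1}{3},$$
which gives $|h_a(t_1) - 1| \leq 1/3$ and in particular $h_a(t_1) \neq 0$; the same bound applies to $h_b(t_2)$. Adding, one gets $|(h_a(t_1) + h_b(t_2) - 1) - 1| \leq 2/3$, so $|h_a(t_1) + h_b(t_2) - 1| \geq 1/3 > 0$.

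For part (1) the task is to bound $|t|$. Using $2/3 \leq |h_a(t_1)| \leq 4/3$ (and likewise for $h_b$), together with $|h_a(t_1) + h_b(t_2) - 1| \geq 1/3$, one converts the hypotheses $|t_i| < 1/(4\|\cdot\|)$ into an estimate $|t| < 1/\|a+b\|$ along the lines of the corresponding computation in \cite{Haa}; the very same estimates, combined with norm bounds on $a(t_1)$ and $b(t_2)$ obtained from \eqref{ab}, also yield $\|\rho a(t_1) b(t_2)\| < 1$ with the chosen $\rho$, so that part (3) of the previous lemma is applicable.

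Part (2) is then immediate. On the one hand, inverting the factorization above and applying $\varphi$ gives
$$\varphi\bigl((c_0 + c_1 a + c_2 b + c_3 ab)^{-1}\bigr) = \frac{h_a(t_1) h_b(t_2)}{h_a(t_1) + h_b(t_2) - 1}\,\tilde{h}_{a+b}(t);$$
on the other hand, the preceding lemma evaluates this same quantity as $\tilde{h}_a(t_1)\tilde{h}_b(t_2)\bigl(1 - \rho(\tilde{h}_a(t_1) - h_a(t_1))(\tilde{h}_b(t_2) - h_b(t_2))\bigr)^{-1}$. Substituting $\rho = 1/(h_a(t_1) h_b(t_2))$ and clearing denominators produces exactly the claimed identity for $\tilde{h}_{a+b}(t)$. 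I expect the main obstacle to be not the algebra, which is forced, but the analytic bookkeeping verifying that $t$ stays in the radius of convergence of $\tilde{h}_{a+b}$ and that $\|\rho a(t_1) b(t_2)\| < 1$; these are precisely the ``minor modifications'' of Haagerup's estimates that the text refers to, now carried out with the two distinct states $\varphi$ and $\psi$ rather than a single one.
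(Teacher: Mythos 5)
Your proposal is correct and follows essentially the same route as the paper: both take $\rho = 1/(h_a(t_1)h_b(t_2))$ in the preceding lemma so that $c_3=0$ and $c_0+c_1a+c_2b$ collapses to $c_0(1-t(a+b))$, then apply part (3) of that lemma and simplify; the first part and the bound $|t|<1/\|a+b\|$ are, in the paper as in your sketch, deferred to Theorem 3.4(a) of Haagerup. Your Neumann-series estimates for the non-vanishing claims merely spell out what that citation contains.
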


\begin{proof}
First part is just Lemma  is part (a) of Theorem 3.4 in \cite{Haa}.  

Regarding part (2), taking $\rho=\frac{1}{h_a(t_1)h_b(t_2)}$ in previos lemma yileds
\begin{eqnarray*}
c_0&=&\frac{h_a(t_1)+h_a(t_2)-1}{h_a(t_1)h_b(t_2)}, \\
c_1&=&-\frac{t_1}{h_b(t_2)}, \\
c_2&=&-\frac{t_2}{h_a(t_1)},\\
c_3&=&0,\\
t&=&-\frac{c_1}{c_0}   = -\frac{c_2}{c_0},
\end{eqnarray*}
and $c_0+c_1a+c_2b+c_3ab=c_0(1-t(a+b))$.

Hence
\begin{eqnarray*}
\tilde{h}_{a+b}(t)&=& c_0\varphi((c_0+c_1a+c_2b+c_3ab)^{-1}) \\
&=&c_0\tilde{h}_a(t_1)\tilde{h}_b(t_2)(1-\rho [\tilde{h}_a(t_1)-h_a(t_1)][\tilde{h}_b(t_2)-h_b(t_2)])^{-1}
\end{eqnarray*}

which simplifies to the desired expression.
\end{proof}

Now we are in position to prove the desired linearizing property.
\begin{thm}
Let $a$  and $b$ be to elements in  $\mathcal{A}$  c. free   with respect to $(\varphi, \psi)$.  For $|z| < \min\{ \frac{1}{6\|a\|}, \frac{1}{6\|b\|}, \frac{1}{6\|a+b\|} \}$,
\begin{equation}\label{Eqn:LineariztionPropertycR}
R_{a+b}^{(\varphi,\psi)}(z)=R_a^{(\varphi,\psi)}(z)+R_b^{(\varphi,\psi)}(z). 
\end{equation}
\end{thm}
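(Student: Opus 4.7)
The plan is to use Theorem~\ref{Thm:FreeCopiesCFAlgebras} to replace $a$ and $b$ by copies that are \emph{also} free with respect to $\psi$, and then derive the linearization of $^cR$ from Voiculescu's linearization of the ordinary $R$-transform combined with the identity of Lemma~\ref{Lemma:IdentityTildeha+b}. Before invoking that theorem I must check its hypotheses on the unital $C^*$-subalgebra of $\mathcal{A}$ generated by $a$ and $b$: the spanning-by-unitaries condition is automatic in any unital $C^*$-algebra, whereas the case where $\varphi$ is a unital $*$-homomorphism (a character) can be handled separately, since then $\tilde h_a(t)=(1-t\varphi(a))^{-1}$ forces $R_a^{(\varphi,\psi)}\equiv \varphi(a)$ and~\eqref{Eqn:LineariztionPropertycR} reduces to linearity of $\varphi$.

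In the generic case, Theorem~\ref{Thm:FreeCopiesCFAlgebras} supplies a Hilbert space $H$, vectors $\xi,\tilde\eta\in H$, and injective $*$-representations $\rho_\alpha,\rho_\beta$ that preserve all moments with respect to $(\varphi,\psi)$, and for which $\rho_\alpha(a)$ and $\rho_\beta(b)$ are free with respect to $\psi_{\tilde\eta}$. Since $h_\cdot,\tilde h_\cdot,k_\cdot,\tilde k_\cdot$ and hence $R^{(\varphi,\psi)}_\cdot$ depend only on the two moment sequences, I may replace $a,b$ by their images and work inside $\mathbb{B}(H)$.

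For $z$ with $0<|z|<\min\{1/(6\|a\|),1/(6\|b\|),1/(6\|a+b\|)\}$, set $t_1:=k_a^{-1}(z)$ and $t_2:=k_b^{-1}(z)$, so that $z=t_1h_a(t_1)=t_2h_b(t_2)$, which is hypothesis~\eqref{Hypo} of Lemma~\ref{Lemma:IdentityTildeha+b}. The freeness of $\rho_\alpha(a)$ and $\rho_\beta(b)$ with respect to $\psi_{\tilde\eta}$ allows me to invoke Voiculescu's linearization of the ordinary $R$-transform; at the level of $k^{-1}$ it reads
\begin{equation*}
\frac{1}{k_{a+b}^{-1}(z)}-\frac{1}{z}=\Big(\frac{1}{t_1}-\frac{1}{z}\Big)+\Big(\frac{1}{t_2}-\frac{1}{z}\Big),
\end{equation*}
which, using $1/t_1=h_a(t_1)/z$ and $1/t_2=h_b(t_2)/z$, simplifies to $k_{a+b}^{-1}(z)=z/(h_a(t_1)+h_b(t_2)-1)=:t$, the very quantity from Lemma~\ref{Lemma:IdentityTildeha+b}(1). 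Part~(2) of that lemma then provides an explicit formula for $\tilde h_{a+b}(t)$.

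The remaining step is the algebraic verification that
\begin{equation*}
\frac{1}{t}-\frac{1}{t\,\tilde h_{a+b}(t)}=\bigg(\frac{1}{t_1}-\frac{1}{t_1\tilde h_a(t_1)}\bigg)+\bigg(\frac{1}{t_2}-\frac{1}{t_2\tilde h_b(t_2)}\bigg).
\end{equation*}
Substituting the expressions for $1/t_1,1/t_2,1/t$, both sides should collapse to $\tfrac{1}{z}[h_a(t_1)-h_a(t_1)/\tilde h_a(t_1)+h_b(t_2)-h_b(t_2)/\tilde h_b(t_2)]$; on the left this exploits the factorization of the numerator of $1-1/\tilde h_{a+b}(t)$ as $h_a(t_1)\tilde h_b(t_2)(\tilde h_a(t_1)-1)+h_b(t_2)\tilde h_a(t_1)(\tilde h_b(t_2)-1)$. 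The main effort thus lies in this bookkeeping; conceptually, the proof is a clean two-step reduction to the free case, and the value at $z=0$ follows by analyticity on a punctured neighbourhood of $0$.
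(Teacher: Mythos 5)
Your proposal is correct and follows essentially the same route as the paper: reduce to the case where the pair is also free with respect to $\psi$ via Theorem~\ref{Thm:FreeCopiesCFAlgebras}, use the linearization of the ordinary $R$-transform for $\psi$ to identify $k_{a+b}^{-1}(z)$ with the parameter $t$ of Lemma~\ref{Lemma:IdentityTildeha+b}, and finish with the same algebraic identity for $\tilde h_{a+b}(t)$ (your factorization of the numerator is exactly what the paper's computation amounts to). The only difference is cosmetic ordering --- the paper does the algebra first and invokes Haagerup's Theorem 4.3 to get $t=t_3$ at the very end.
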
 

\begin{proof}

Firstly, notice that if $\varphi$ is a $*$-homomorphism, $R_{a}^{(\varphi,\psi)}(z)=\varphi(a)$ for any $a$. Hence it is trivial that $R_{a+b}^{(\varphi,\psi)}=R_{a}^{(\varphi,\psi)}+R_{b}^{(\varphi.\psi)}$. Hence we might assume $\varphi$ is not a $*$-homomorphism.

By Theorem \ref{Thm:FreeCopiesCFAlgebras} we might assume that $a$ and $b$ are free with respect to $\psi$.

Since $|z|< \min\{ \frac{1}{6\|a\| } , \frac{1}{6\|b\|}, \frac{1}{6\|a+b\|} \}$,  all inverse functions  nedded to define
 $R^{(\varphi,\psi)}$  are well defined. Now, take
\begin{eqnarray*}
t_1=k_a^{-1}(z), t_2=k_b^{-1}(z), t_3=k_{a+b}^{-1}(z)
\end{eqnarray*}
that satisfy Eq. \eqref{Hypo}:
\begin{align} \label{Hypoo}
t_1 h_a(t_1) = k_{a}(t_1)= z = k_b(t_2)= t_2 h_b(t_2).
\end{align}

With this notation, and  by Definition \ref{Def:cRCstarAlg} (Eq.   \eqref{Eqn:cR-T}), we can rewrite \eqref{Eqn:LineariztionPropertycR}  as
\begin{equation}\label{Eqn:ha+b}
\frac{1}{t_3}\left( 1-\frac{1}{\tilde{h}_{a+b}(t_3)}  \right)=\frac{1}{t_1}\left( 1 - \frac{1}{\tilde{h}_a(t_1)}  \right)+\frac{1}{t_2}\left( 1 - \frac{1}{\tilde{h}_b(t_2)}  \right)
\end{equation}

 Lemma \ref{Lemma:IdentityTildeha+b} (we can use it because Eq. \eqref{Hypoo} is satisfied) implies that the number (see also Eq. \eqref{Hypoo}, again)
 
\begin{align}\label{t}
t=\frac{t_1h_a(t_1)}{h_a(t_1)+h_b(t_2)-1} = \frac{t_2h_b(t_2)}{h_a(t_1)+h_b(t_2)-1}   ,
\end{align}
satisfies
\begin{align*}
\tilde{h}_{a+b}(t)=[h_a(t_1)+h_b(t_2)-1]\frac{\tilde{h}_a(t_1)\tilde{h}_b(t_2)}{h_a(t_1)\tilde{h}_b(t_2)+h_b(t_2)\tilde{h}_a(t_1)-\tilde{h}_a(t_1)\tilde{h}_b(t_2)}.
\end{align*}
From the latter we obtain 
\begin{eqnarray}
\frac{1}{t} \left( 1-\frac{1}{\tilde{h}_{a+b}(t)} \right)&=& \frac{\tilde{h}_{a+b}(t)-1}{t\tilde{h}_{a+b}(t)} \nonumber \\
&=& \frac{h_a(t_1)\tilde{h}_a(t_1)\tilde{h}_b(t_2)}{t(h_a(t_1)+h_b(t_2)-1)\tilde{h}_a(t_1)\tilde{h}_b(t_2)} \nonumber\\
& +& \frac{h_b(t_2)\tilde{h}_a(t_1)\tilde{h}_b(t_2)}{t(h_a(t_1)+h_b(t_2)-1)\tilde{h}_a(t_1)\tilde{h}_b(t_2)} \nonumber\\
&- & \frac{h_a(t_1)\tilde{h}_b(t_2)}{t(h_a(t_1)+h_b(t_2)-1)\tilde{h}_a(t_1)\tilde{h}_b(t_2)}\nonumber \\
&-& \frac{h_b(t_2)\tilde{h}_a(t_1)}{t(h_a(t_1)+h_b(t_2)-1)\tilde{h}_a(t_1)\tilde{h}_b(t_2)}   \nonumber  \label{Eqn:Tildeha+b}
\end{eqnarray}

Using Eq. \eqref{t} (the term in the middle) we obtain that
\begin{align}\label{pin1}
\frac{h_a(t_1)\tilde{h}_a(t_1)\tilde{h}_b(t_2)}{t(h_a(t_1)+h_b(t_2)-1)\tilde{h}_a(t_1)\tilde{h}_b(t_2)}  - & \frac{h_a(t_1)\tilde{h}_b(t_2)}{t(h_a(t_1)+h_b(t_2)-1)\tilde{h}_a(t_1)\tilde{h}_b(t_2)} \notag
\\ 
&  \hspace{1.5cm} =   \frac{1}{t_1}\left( 1 -\frac{1}{\tilde{h}_a(t_1)} \right).
\end{align}   
 
Using Eq. \eqref{t} (the term on the right) we obtain that
\begin{align}\label{pin2}
\frac{h_b(t_2)\tilde{h}_a(t_1)\tilde{h}_b(t_2)}{t(h_a(t_1)+h_b(t_2)-1)\tilde{h}_a(t_1)\tilde{h}_b(t_2)} - & \frac{h_b(t_2)\tilde{h}_a(t_1)}{t(h_a(t_1)+h_b(t_2)-1)\tilde{h}_a(t_1)\tilde{h}_b(t_2)}
 \notag
\\ 
&  \hspace{1.5cm} =   \frac{1}{t_2}\left( 1 -\frac{1}{\tilde{h}_b(t_2)} \right) .
\end{align}   
Eqs. \eqref{Eqn:Tildeha+b}, \eqref{pin1} and \eqref{pin2} imply that
\begin{align}\label{pin3}
\frac{1}{t} \left( 1-\frac{1}{\tilde{h}_{a+b}(t)} \right) = \frac{1}{t_1}\left( 1 -\frac{1}{\tilde{h}_a(t_1)} \right)+\frac{1}{t_2}\left( 1 -\frac{1}{\tilde{h}_b(t_2)} \right). 
\end{align}

Comparing \eqref{Eqn:ha+b} and \eqref{pin3}, it suffices to show
$$
\frac{1}{t_3} \left( 1-\frac{1}{\tilde{h}_{a+b}(t_3)} \right)=\frac{1}{t} \left( 1-\frac{1}{\tilde{h}_{a+b}(t)} \right).
$$
But, since $a$ and $b$ are free with respect to $\psi$, Theorem 4.3 in \cite{Haa} implies $t=t_3$, and hence we get the conclusion.
\end{proof}

\end{document}